\tikzset{->-/.style={decoration={
  markings,
  mark=at position #1 with {\arrow{>}}},postaction={decorate}}}
\theoremstyle{plain}
  \newtheorem{thm}{Theorem}[section]
  \newtheorem{defn}{Definition}[section]
  \newtheorem{prop}{Proposition}[section]
\theoremstyle{definition}
  \newtheorem{example}{Example}[section]
\theoremstyle{remark}
  \newtheorem*{rem}{Remark}
\newcommand{\mf}{\mathfrak}
\newcommand{\on}{\operatorname}
\newcommand{\g}{\mathfrak{g}}
\newcommand{\h}{\mathfrak{h}}
\newcommand{\Hom}{\operatorname{Hom}}
\newcommand{\la}{\langle}
\newcommand{\ra}{\rangle}
\newcommand{\R}{\mathbb{R}}
\newcommand{\half}{\frac{1}{2}}
\newcommand{\gqP}{$\g$-quasi-Poisson}
\newcommand{\Lgqb}{Lie $\g$-quasi-bialgebra}
\title{Lie groups in quasi-Poisson geometry and braided Hopf algebras}
\author{Pavol \v{S}evera}
\address{Section of Mathematics, University of Geneva, Switzerland}
\email{pavol.severa@gmail.com}
\author{Fridrich Valach}
\address{Section of Mathematics, University of Geneva, Switzerland}
\email{fridrich.valach@gmail.com}
\thanks{Supported in part by  the grant MODFLAT of the European Research Council and the NCCR SwissMAP of the Swiss National Science Foundation.}
\begin{document}

\begin{abstract}
We extend the notion of Poisson-Lie groups and Lie bialgebras from Poisson to $\g$-quasi-Poisson geometry and provide a quantization to braided Hopf algebras in the corresponding Drinfeld category. The basic examples of these $\g$-quasi-Poisson Lie groups are  nilpotent radicals of parabolic subgroups. We also provide examples of moment maps in this new context coming from moduli spaces of flat connections on surfaces.
\end{abstract}
\maketitle

\section{Introduction}

Quasi-Poisson geometry, introduced by Alekseev, Kosmann-Schwarzbach and Meinrenken \cite{AKM} in their study of moduli spaces of flat connections, is an ``infinitesimally braided'' version of Poisson geometry. A $\g$-quasi-Poisson manifold is by definition a manifold with an action of a Lie algebra $\g$ and with an invariant bivector field $\pi$, satisfying
$$[\pi,\pi]/2=\rho(\phi),$$
where $\rho$ is the action of $\g$ and $\phi\in\bigwedge^3\g$ is built from an invariant inner product on $\g$ and from the structure constants of $\g$.

The inner product on $\g$ (and a Drinfeld associator) was used by Drinfeld \cite{D} to deform the symmetric monoidal structure on the category of $U\g$-modules to a braided monoidal structure. As observed by Enriquez and Etingof \cite{EE}, the natural definition of deformation quantization of a $\g$-quasi-Poisson manifold is a deformed product which is associative in Drinfeld's category.

In this work we extend the theory of Poisson-Lie groups and Lie bialgebras to the quasi-Poisson world, and we also quantize these quasi-Poisson versions to braided Hopf algebras in the Drinfeld category. This means, in particular, that the deformed product and coproduct satisfy the relation
$$
\begin{tikzpicture}[scale=0.8]
\draw(0,0)--(1,1)--(2,0) (1,1)--(1,2) (0,3)--(1,2)--(2,3);
\node at (3,1.5) {=};
\begin{scope}[xshift=4cm]
\draw (2,0.7)--(0,2.3);
\draw[white,line width=5] (0,0.7)--(2,2.3);
\draw(0,0.7)--(2,2.3);
\draw (0,0)--(0,3) (2,0)--(2,3);
\end{scope}
\end{tikzpicture}
$$

 We also show that many basic notions from the theory of Poisson-Lie groups / Lie bialgebras have a natural quasi-Poisson version. This is true in particular for Manin triples (that get replaced by quadruples) and for moment maps, for which one can find natural examples using moduli spaces of flat connections on surfaces. The nilpotent radicals of parabolic subgroups are the simplest examples of these quasi-Poisson Lie groups.

\section{Quasi-Poisson manifolds}
Let $\g$ be a Lie algebra with an invariant element $t\in(S^2\g)^\g$. We shall suppose that $t$ is non-degenerate and denote the corresponding bilinear pairing (the inverse of $t$) on $\g$ by $\la\,,\,\ra$. Let $\phi\in\bigwedge^3\g$ be given by
$$\phi(\alpha,\beta,\gamma)=\frac{1}{4}\alpha\bigl([t^\sharp\beta,t^\sharp\gamma]\bigr)\text{ for all }\alpha,\beta,\delta\in\g^*.$$

\begin{defn}[\cite{AKM}]
A \emph{$\g$-quasi-Poisson manifold} is a manifold $M$ with an action $\rho$ of $\g$ and with a $\g$-invariant bivector field $\pi$, satisfying
$$[\pi,\pi]/2=\rho(\phi),$$
where $\rho:\g\to\Gamma(TM)$ is extended to an algebra morphism $\rho:\bigwedge\g\to\Gamma(\bigwedge TM)$. Equivalently, the bracket $\{f,g\}:=\pi(df,dg)$ on $C^\infty(M)$ satisfies
\begin{equation}\label{qJacobi}
\{\{f,g\},h\}+\{\{g,h\},f\}+\{\{h,f\},g\}=\rho(\phi)(df,dg,dh).
\end{equation}
A map $f:M_1\to M_2$ between two $\g$-quasi-Poisson manifolds $(M_1,\rho_1,\pi_1)$ and $(M_2,\rho_2,\pi_2)$ is \emph{quasi-Poisson} if it is $\g$-equivariant and if $f_*\pi_1=\pi_2$.
\end{defn}
If $M$ is $\g$-quasi-Poisson and if the action $\g$ on $M$ integrates to an action of a connected Lie group $G$, we shall say that $M$ is $G$-quasi-Poisson.

%

There is a natural $\g$-quasi-Poisson structure on the product of two $\g$-quasi-Poisson manifolds:

\begin{defn}[\cite{AKM}]
If $(M_1,\rho_1,\pi_1)$ and $(M_2,\rho_2,\pi_2)$ are $\g$-quasi-Poisson manifolds then their \emph{fusion product} $M_1\circledast M_2$ is the $\g$-quasi-Poisson manifold $M_1\times M_2$ with the diagonal action of $\g$ and with the bivector field
$$\pi=\pi_1+\pi_2-\frac{1}{2}\,t^{ij}\rho_1(e_i)\wedge\rho_2(e_j),$$
where $t=t^{ij}e_i\otimes e_j$ in a basis $e_i$ of $\g$.
\end{defn}

Let us note that the fusion product is associative, but not commutative. It makes the category of $\g$-quasi-Poisson manifolds to a (non-symmetric) monoidal category.

\begin{example}[\cite{LBS}]
A particularly useful class of $\g$-quasi-Poisson manifolds is given by triples $(M,\rho,0)$, where $\rho$ is an action of $\g$ such that $\rho(t)=0\in\Gamma(T^{\otimes 2}M)$, or equivalently such that the stabilizers of the action are coisotropic Lie subalgebras of $\g$. In this case the diagonal map $M\to M\circledast M$ is $\g$-quasi-Poisson.
\end{example}

Another useful construction is the \emph{coisotropic reduction}:
\begin{prop}[\cite{LBS2}]
Let $\mf p\subset\g$ be a coisotropic Lie subalgebra (i.e.\ $\mf p^\perp\subset\mf p$) and let $\g':=\mf p/\mf p^\perp$. If $M':=M/\mf p^\perp$ is a manifold, i.e.\ if there is a manifold $M'$ and a surjective submersion $p:M\to M'$ whose fibers are the $\mf p^\perp$-orbits, then $(M',\rho',\pi')$ is $\g'$-quasi-Poisson, where $\pi'=p_*\pi$ and $\rho'$ is induced from $\rho$. Similarly, if $M''=M/\mf p$ is a manifold then $(M'',\pi'')$ is Poisson, where $\pi''$ is the push-forward of $\pi$.
\end{prop}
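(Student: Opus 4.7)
My plan is to handle the first statement in three steps --- algebra, descent of $(\rho, \pi)$, and the quasi-Poisson identity --- and derive the Poisson case from the same calculation. For the algebra, coisotropy of $\mf p$ combined with $\on{ad}$-invariance of $\la\,,\,\ra$ forces $[\mf p,\mf p^\perp]\subset\mf p^\perp$, making $\mf p^\perp$ an ideal in $\mf p$, so $\g'=\mf p/\mf p^\perp$ is a Lie algebra; the pairing restricted to $\mf p$ has kernel exactly $\mf p^\perp$ and descends to a non-degenerate invariant pairing on $\g'$, yielding the corresponding $\phi'\in\bigwedge^3\g'$. For the descent, $\mf p$ normalises $\mf p^\perp$, so the $\mf p$-action on $M$ descends to $M'$ (and $\mf p^\perp$ acts trivially there, producing $\rho'$); the $\mf p^\perp$-invariance of $\pi$ makes $\{p^*f,p^*g\}$ invariant for $f,g\in C^\infty(M')$, so $\pi' = p_*\pi$ is well-defined, and its $\g'$-invariance follows from $\mf p$-invariance of $\pi$.

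The main step is $[\pi',\pi']/2 = \rho'(\phi')$. By Schouten-bracket compatibility with push-forward of projectable multivectors, this reduces to $p_*\rho(\phi) = \rho'(\phi')$ pointwise. I would choose a vector-space splitting $\g=\mf p^\perp\oplus\mf m\oplus\mf n$ with $\mf n$ an \emph{isotropic} complement of $\mf p$ (possible because the pairing between $\mf p^\perp$ and $\g/\mf p$ is perfect, so any initial complement can be adjusted to be isotropic) and $\mf m:=\mf n^\perp\cap\mf p$, which is automatically a complement of $\mf p^\perp$ in $\mf p$ (hence $\mf m\cong\g'$) satisfying $\mf m\perp\mf n$. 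With these choices, $t^\sharp$ sends the dual of $\mf p^\perp$ into $\mf n$, of $\mf m$ into $\mf m$, and of $\mf n$ into $\mf p^\perp$, and the computation is finished by two observations: (i) $dp\circ\rho$ kills $\mf p^\perp$, so components of $\phi$ with any $\mf p^\perp$-index contribute zero; (ii) for the remaining components (indices only in $\mf m\cup\mf n$), the formula $\phi(\alpha,\beta,\gamma)=\frac{1}{4}\alpha([t^\sharp\beta, t^\sharp\gamma])$ shows that any $\mf n$-index forces the result to be the $\mf n$-component of a bracket of elements of $\mf p$, which vanishes since $[\mf p,\mf p]\subset\mf p$ and $[\mf p,\mf p^\perp]\subset\mf p^\perp$. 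Only the pure-$\mf m$ part of $\phi$ survives, and under $\mf m\cong\g'$ it is exactly $\phi'$.

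The Poisson case is similar: under $p''\colon M\to M''$, $\pi$ descends to $\pi''$ by $\mf p$-invariance, and $[\pi'',\pi''] = 2\,p''_*\rho(\phi)$; since $\mf p$ acts trivially on $M''$, only pure-$\mf n$ components of $\phi$ could contribute, but these vanish because $[t^\sharp e^\nu, t^\sharp e^\xi]\in[\mf p^\perp,\mf p^\perp]\subset\mf p^\perp$ has no $\mf n$-component. The hard step is the middle paragraph: the carefully chosen splitting is what makes the vanishing of all ``mixed'' components of $\phi$ transparent, while a generic splitting would produce many apparently non-zero terms whose cancellation would still need separate verification.
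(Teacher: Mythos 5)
The paper offers no proof of this proposition --- it is imported from \cite{LBS2} --- so there is nothing in the text to compare against; judged on its own, your argument is correct and complete. The preliminary algebra ($[\mf p,\mf p^\perp]\subset\mf p^\perp$ from invariance of the pairing, descent of the pairing to $\g'$, projectability of $\rho|_{\mf p}$ and of $\pi$) is standard and you handle it properly, and the heart of the matter is exactly where you locate it: reducing $[\pi',\pi']/2=\rho'(\phi')$ to $p_*\rho(\phi)=\rho'(\phi')$ via naturality of the Schouten bracket under push-forward, and then killing the unwanted components of $\phi$. Your choice of splitting $\g=\mf p^\perp\oplus\mf m\oplus\mf n$ with $\mf n$ isotropic and $\mf m=\mf n^\perp\cap\mf p$ does the job: writing $\phi(\alpha,\beta,\gamma)=\frac14\la t^\sharp\alpha,[t^\sharp\beta,t^\sharp\gamma]\ra$ and noting that $t^\sharp$ exchanges the annihilators of the three summands with $\mf n$, $\mf m$, $\mf p^\perp$ respectively, a component with an $\mf n$-leg but no $\mf p^\perp$-leg is $\la\mf p^\perp,[\mf p,\mf p]\ra=0$, and the pure-$\mf n$ component is $\la\mf p^\perp,[\mf p^\perp,\mf p^\perp]\ra=0$, which gives both the quasi-Poisson and the Poisson statement from the same computation. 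The only point you gloss over slightly is the identification of the pure-$\mf m$ part of $\phi$ with $\phi'$: one should note that $[\mf m,\mf m]\subset\mf p=\mf p^\perp\oplus\mf m$ and that the $\mf p^\perp$-component of such a bracket pairs to zero with $\mf m$, so only the $\mf m$-component (which is the bracket of $\g'$) survives; this is a one-line remark and does not affect correctness.
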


The projection $(M_1\circledast_\g M_2)/\mf p^\perp\to(M_1/\mf p^\perp)\circledast_{\g'}(M_2/\mf p^\perp)$ is $\g'$-quasi-Poisson, i.e.\ reduction can be seen as a (colax) monoidal functor.

\section{$\g$-Quasi-Poisson Lie groups and Lie $\g$-quasi-bialgebras}

\begin{defn}
A \emph{$\g$-quasi-Poisson Lie group} is a Lie group $H$ with a $\g$-quasi-Poisson structure $(\rho,\pi)$, such that
the multiplication on $H$ is  a quasi-Poisson map $$H\circledast H\to H.$$
\end{defn}

In more detail it means the following: the $\g$-quasi-Poisson structure $(\rho,\pi)$ is such that the action $\rho$ of $\g$ on $H$ is by infinitesimal automorphisms of the Lie group $H$, and the bivector field $\pi$ satisfies the relation (for any $h,h'\in H$)
\begin{equation}\label{qmult}
\pi|_{hh'}=h\,\pi|_{h'}+\pi|_{h}\,h'-\frac{1}{2}\,t^{ij}(\rho(e_i)|_{h}\,h')\wedge(h\,\rho(e_j)|_{h'}),
\end{equation}
where $\pi|_h\in\bigwedge^2T_hH$ is the value of the bivector field $\pi$ at $h$, and $h\,\pi|_{h'}\in\bigwedge^2T_{hh'}H$ is $\pi|_{h'}$ left-translated by $h$ to $hh'$ (and similarly for $\pi|_{h}\,h'$ and right translation). Setting $h=h'=1$ we get
$$\pi|_1=0.$$

The corresponding infinitesimal notion is as follows.

\begin{defn}
A \emph{Lie $\g$-quasi-bialgebra} is a triple $(\h,\dot\rho,\delta)$, where
\begin{itemize}
\item $\h$ is a Lie algebra
\item $\dot\rho$ is an action of $\g$ on $\h$ by Lie algebra derivations
\item $\delta:\h\to\bigwedge^2\h$ is a $\g$-invariant Lie cobracket on $\h$
\end{itemize}
such that
\begin{equation}\label{mcoc}
\on{ad}^{(2)}_X\delta(Y)-\on{ad}^{(2)}_Y\delta(X)-\delta([X,Y])
-t^{ij}\,\dot\rho(e_i)(X)\wedge\dot\rho(e_j)(Y)=0
\end{equation}
for all $X,Y\in\h$, where $\on{ad}^{(2)}_X:=\on{ad}_X\otimes1+1\otimes\on{ad}_X$.
\end{defn}

\begin{thm}
If $(H,\rho,\pi)$ is a $\g$-quasi-Poisson Lie group then $(\h,\dot\rho,\delta)$ is a Lie $\g$-quasi-bialgebra, with
$$\bigl(\dot\rho(u)X\bigr)^L:=[\rho(u),X^L]\quad\forall u\in\g,X\in\h$$
where, for $X\in\h$, $X^L$ denotes the corresponding left-invariant vector field on $H$, and
$$\delta(X)=[X^L,\pi]|_1,$$
i.e.\ $\delta$ is the linearization of $\pi$ at the unit element $1\in H$.
\end{thm}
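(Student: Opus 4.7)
The strategy is to differentiate each axiom of a $\g$-quasi-Poisson Lie group at the unit $1\in H$ and read off the matching Lie $\g$-quasi-bialgebra axiom. I begin with the elementary checks. Setting $h=h'=1$ in (\ref{qmult}) forces $\pi|_1=0$, so $[X^L,\pi]|_1$ is a well-defined element of $\bigwedge^2 T_1H=\bigwedge^2\h$, linear in $X$. Since $\rho$ acts by infinitesimal group automorphisms, each $\rho(u)$ vanishes at $1$ and its flow preserves the subspace of left-invariant vector fields; hence $[\rho(u),X^L]$ is again left-invariant, which makes $\dot\rho(u)$ well-defined. Combining Jacobi for vector fields with the identity $[X^L,Y^L]=[X,Y]^L$ then shows that $\dot\rho$ is a Lie action and that each $\dot\rho(u)$ is a derivation of $\h$.

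For the $\g$-invariance of $\delta$ I use graded Jacobi for the Schouten bracket together with $[\rho(u),\pi]=0$:
$$[(\dot\rho(u)X)^L,\pi]=[[\rho(u),X^L],\pi]=[\rho(u),[X^L,\pi]].$$
Evaluating at $1$: because $\rho(u)|_1=0$ with linearization at $1$ controlled by $\dot\rho(u)$, the Lie derivative along $\rho(u)$ of any bivector field reduces at $1$ to the natural tensor-extension action of $\dot\rho(u)$ on $\bigwedge^2\h$, yielding exactly $\dot\rho(u)^{(2)}\delta(X)$.

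The co-Jacobi identity for $\delta$ is where the quasi-Poisson anomaly could naively interfere, but it is invisible at the relevant order. Each $\rho(e_i)$ vanishes at $1$, so in local coordinates around $1$ the coefficients of the trivector field $\rho(\phi)$ are $O(x^3)$; consequently $[\pi,\pi]=2\rho(\phi)$ has no linear part in $x$ at $1$. Trivializing $TH$ by left-invariant vector fields identifies the linear part of $\pi$ with $\delta$, and the standard Poisson--Lie argument identifies the vanishing of the linear part of $[\pi,\pi]$ with co-Jacobi of $\delta$. Hence $\delta$ satisfies co-Jacobi, just as in the Poisson--Lie case.

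Finally, equation (\ref{mcoc}) is extracted from the multiplicativity relation (\ref{qmult}) by setting $h=\exp(X)$, $h'=\exp(Y)$ and expanding both sides to bilinear order in $X,Y\in\h$. The first two summands on the right produce the classical Poisson--Lie cocycle combination $\on{ad}^{(2)}_X\delta(Y)-\on{ad}^{(2)}_Y\delta(X)$, the Baker--Campbell--Hausdorff quadratic correction $\frac{1}{2}[X,Y]$ inside $hh'$ absorbs $\delta([X,Y])$ via the linearization of $\pi$, and the quasi-Poisson correction $-\frac{1}{2}t^{ij}(\rho(e_i)|_h\,h')\wedge(h\,\rho(e_j)|_{h'})$ contributes $-t^{ij}\dot\rho(e_i)(X)\wedge\dot\rho(e_j)(Y)$, since $\rho(e_i)|_1=0$ supplies the required vanishing and its linearization at $1$ is $\dot\rho(e_i)$. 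This last Taylor expansion, with its careful bookkeeping of left/right translations and the two simultaneous linearizations of $\rho(e_i)$ and $\rho(e_j)$, is the main technical obstacle; the earlier steps are direct adaptations of the classical Poisson--Lie group argument.
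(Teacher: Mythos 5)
Your proposal is correct in substance, and your treatment of the co-Jacobi identity (linearizing \eqref{qJacobi} at $1$ and observing that $\rho(\phi)$ has no linear part there because each $\rho(e_i)$ vanishes at $1$) is exactly the paper's argument. Where you genuinely diverge is in deriving the cocycle condition \eqref{mcoc}: you expand \eqref{qmult} directly to bilinear order in $h=\exp(X)$, $h'=\exp(Y)$, whereas the paper differentiates \eqref{qmult} only in $h'$ to obtain the global first-order identity
$$\mathcal L_{X^L}\pi=\delta(X)^L-\tfrac{1}{2}\,t^{ij}\rho(e_i)\wedge[X^L,\rho(e_j)],$$
and then gets \eqref{mcoc} for free by applying $\mathcal L_{X^L}\mathcal L_{Y^L}-\mathcal L_{Y^L}\mathcal L_{X^L}=\mathcal L_{[X,Y]^L}$ to $\pi$. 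Both routes work; the paper's two-step version buys a cleaner bookkeeping (the commutator of Lie derivatives performs the antisymmetrization and absorbs the BCH correction automatically) and an intermediate identity \eqref{Liedpi} that is reused later, e.g.\ for the uniqueness claim in Theorem \ref{thm:construction-of-H} and in Proposition \ref{prop:Lie-H-star}. Your explicit checks that $\dot\rho$ is an action by derivations and that $\delta$ is $\g$-invariant are left implicit in the paper and are welcome additions.

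One caveat on your final step: in a fixed trivialization of $\bigwedge^2TH$ (say by left translation to $1$), the mixed partial $\partial_s\partial_u|_0$ of the two translated terms $h\,\pi|_{h'}$ and $\pi|_h\,h'$ does \emph{not} directly yield the combination $\on{ad}^{(2)}_X\delta(Y)-\on{ad}^{(2)}_Y\delta(X)$: one of the two terms has no mixed $(s,u)$-dependence at all, and the left-hand side $\pi|_{hh'}$ contributes, besides the BCH term $\tfrac12\delta([X,Y])$, a \emph{symmetric} second-order (Hessian) term of $\pi$ at $1$. You must antisymmetrize the resulting identity in $X\leftrightarrow Y$ to kill that Hessian; only then do both $\on{ad}^{(2)}$-terms appear and the factor in front of $\delta([X,Y])$ and of the $t^{ij}$-term come out right (using the symmetry of $t^{ij}$). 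This is precisely the step the paper's commutator-of-Lie-derivatives argument handles invisibly, so either insert the antisymmetrization explicitly or adopt the two-step route.
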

\begin{proof}Rewriting \eqref{qmult} as 
$$\pi|_{hh'}\,h'^{-1}-\pi|_h=h\,\pi|_{h'}\,h'^{-1}-\frac{1}{2}\,t^{ij}\rho(e_i)|_{h}\wedge(h\,\rho(e_j)|_{h'}\,h'^{-1}),$$
setting $h'=\exp(\epsilon X)$   and differentiating w.r.t.\ $\epsilon$ at $\epsilon=0$ yields
\begin{equation}\label{Liedpi}
\mathcal L_{X^L}\pi=\delta(X)^L-\frac{1}{2}\,t^{ij}\rho(e_i)\wedge[X^L,\rho(e_j)].
\end{equation}
The identity $\mathcal L_{X^L}\mathcal L_{Y^L}-\mathcal L_{Y^L}\mathcal L_{X^L}=\mathcal L_{[X,Y]^L}$ then gives
$$(\on{ad}^{(2)}_X\delta(Y))^L-(\on{ad}_Y^{(2)}\delta(X))^L-t^{ij}\,[X^L,\rho(e_i)]\wedge [Y^L,\rho(e_j)]=\delta([X,Y])^L,$$
i.e.\ $\delta$ satisfies \eqref{mcoc}.

It remains to prove that $\delta:\h\to\h\otimes\h$ is a Lie cobracket, i.e.\ that its transpose $\delta^t:\h^*\otimes\h^*\to\h^*$ is a Lie bracket. To see it, notice that $\delta^t$ is the linearization of the quasi-Poisson bracket $\{f,g\}:=\pi(df,dg)$ at $1\in H$. The Jacobi identity for $\delta^t$ then follows from the quasi-Jacobi identity \eqref{qJacobi} and from the fact that the linearization of the trivector field $\rho(\phi)$  at $1\in H$ vanishes.
\end{proof}

%
%
%
%
%
%

\section{Manin quadruples}

Lie bialgebras can be conveniently rephrased in terms of Manin triples. The corresponding notion for Lie $\g$-quasi-bialgebras is as follows.

\begin{defn}
$(\mathfrak{d},\mathfrak{a},\mathfrak{b},\mathfrak{c})$ is a \emph{Manin quadruple} if $\mathfrak{d}$ is a Lie algebra with an invariant nondegenerate symmetric pairing $\la,\ra$ and $\mathfrak{a}$, $\mathfrak{b}$, $\mathfrak{c}$ are its Lie subalgebras such that
\begin{itemize}
  \item $\mathfrak{d}=\mathfrak{a}\oplus\mathfrak{b}\oplus\mathfrak{c}$ (as vector spaces)
  \item $\mf a^\perp=\mathfrak{a}\oplus\mathfrak{b}, \quad \mf c^\perp=\mathfrak{b}\oplus\mathfrak{c}$.
\end{itemize}
\end{defn}

\begin{example}\label{ex:nilp}
Let $\mf d$ be a semisimple Lie algebra and $\mf p_+,\mf p_-\subset\mf d$ a pair of opposite parabolic Lie subalgebras. Let $\mf n_\pm=\mf p_\pm^\perp\subset\mf p_\pm$ be the nilpotent radicals of $\mf p_\pm$. Then
$$(\mf d,\mf n_+,\mf p_+\cap\mf p_-,\mf n_-)$$
is a Manin quadruple.
\end{example}

\begin{prop}
If $(\mf d,\mf a,\mf b,\mf c)$ is a Manin quadruple then $[\mf b,\mf a]\subset\mf a$ and $[\mf b,\mf c]\subset\mf c$. As a result, $\mf a\oplus \mf b\subset\mf d$ is a Lie subalgebra which is a semidirect sum of $\mf a$ and $\mf b$. The restriction of $\la,\ra$ to $\mf b$ is non-degenerate, and $\mf a,\mf c\subset\mf d$ are isotropic subspaces. The pairing $\la,\ra$ provides an isomorphism $\mf a^*\cong\mf c$. 
\end{prop}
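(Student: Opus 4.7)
The plan is to exploit the two key facts $\mf a^\perp=\mf a\oplus\mf b$ and $\mf c^\perp=\mf b\oplus\mf c$ systematically, together with the invariance of $\la\,,\,\ra$.

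First I would extract the easy consequences from the definitions. Since $\mf a\subset\mf a^\perp$ and $\mf c\subset\mf c^\perp$, the subalgebras $\mf a$ and $\mf c$ are isotropic. From $\mf b\subset\mf a^\perp$ and $\mf b\subset\mf c^\perp$ we see that $\mf b$ is orthogonal to $\mf a$ and to $\mf c$. Hence $\mf a\oplus\mf c\subset\mf b^\perp$, and by dimension count (using that $\mf b^\perp$ has codimension $\dim\mf b$ in $\mf d$ by non-degeneracy on $\mf d$) we get
$$\mf b^\perp=\mf a\oplus\mf c.$$
Non-degeneracy of $\la\,,\,\ra|_{\mf b}$ is then immediate: if $b\in\mf b$ pairs trivially with $\mf b$, then it also pairs trivially with $\mf a$ and $\mf c$, so with all of $\mf d$, and $b=0$. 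The same argument shows the pairing $\mf a\times\mf c\to\R$ is non-degenerate, so it induces $\mf a^*\cong\mf c$.

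The main content is showing $[\mf b,\mf a]\subset\mf a$. The idea is to check that $[b,a]$ lies in both $\mf a^\perp$ and $\mf b^\perp$, whose intersection is $\mf a$ by the decomposition $\mf d=\mf a\oplus\mf b\oplus\mf c$. For any $a_1\in\mf a$, invariance gives $\la[b,a],a_1\ra=\la b,[a,a_1]\ra$, and since $\mf a$ is a subalgebra, $[a,a_1]\in\mf a\perp\mf b$, so this vanishes; hence $[b,a]\in\mf a^\perp=\mf a\oplus\mf b$. For any $b_1\in\mf b$, invariance gives $\la[b,a],b_1\ra=\la a,[b_1,b]\ra$, and since $\mf b$ is a subalgebra, $[b_1,b]\in\mf b\perp\mf a$, so this also vanishes; hence $[b,a]\in\mf b^\perp=\mf a\oplus\mf c$. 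Intersecting, $[b,a]\in\mf a$. The inclusion $[\mf b,\mf c]\subset\mf c$ is proved by the identical argument with $\mf a$ replaced by $\mf c$.

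Finally, since $\mf a$ and $\mf b$ are subalgebras with $\mf a\cap\mf b=0$ and $[\mf b,\mf a]\subset\mf a$, the sum $\mf a\oplus\mf b$ is a Lie subalgebra of $\mf d$ in which $\mf a$ is an ideal, exhibiting it as the semidirect sum $\mf a\rtimes\mf b$. I expect no serious obstacle beyond noticing the correct orthogonal characterisation of $\mf a$ as $\mf a^\perp\cap\mf b^\perp$ — once this is in hand, the invariance identity mechanically converts the subalgebra property of $\mf a$ and of $\mf b$ into the two required orthogonalities.
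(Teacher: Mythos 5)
Your proof is correct and follows essentially the same route as the paper: the same two invariance computations $\la[\mf b,\mf a],\mf a\ra=\la\mf b,[\mf a,\mf a]\ra=0$ and $\la[\mf b,\mf a],\mf b\ra=\la\mf a,[\mf b,\mf b]\ra=0$, concluding that $[\mf b,\mf a]$ lies in $(\mf a\oplus\mf b)^\perp=\mf a$ (your $\mf a^\perp\cap\mf b^\perp$ is the same subspace). The remaining assertions, which the paper dismisses as following easily, are filled in correctly.
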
 
\begin{proof}
We have $\la[\mf b,\mf a],\mf a\ra=\la\mf b,[\mf a,\mf a]\ra\subset\la\mf b,\mf a\ra=0$, and similarly $\la[\mf b,\mf a],\mf b\ra=\la\mf a,[\mf b,\mf b]\ra\subset\la\mf a,\mf b\ra=0$, which implies $[\mf b,\mf a]\subset(\mathfrak{a}\oplus\mathfrak{b})^\perp=\mathfrak{a}$, as we wanted to show. The rest of the proposition follows easily.
\end{proof}

\begin{thm}
There is an equivalence between Lie $\g$-quasi-bialgebras and Manin quadruples, given as follows: if $(\mf d,\mf h,\mf g,\mf h^*)$ is a Manin quadruple then $(\h,\dot\rho,\delta)$ is a Lie $\g$-quasi-bialgebra, where
$$\dot\rho(v)X=[v,X]\quad(v\in\g,X\in\h)$$
$$\la\delta(X),A\otimes B\ra=\la X,[B,A]\ra\quad(X\in\h,A,B\in\h^*).$$
\end{thm}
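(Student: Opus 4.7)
The equivalence rests on the principle that the Jacobi identity in $\mf d$, decomposed by the splitting $\mf d = \h \oplus \g \oplus \h^*$, breaks into separate relations --- one for each triple of summand-types --- and these are precisely the axioms of a Lie $\g$-quasi-bialgebra. I would treat the forward direction, then the reverse, and finally check mutual inverseness. Throughout, let $p_\h, p_\g, p_{\h^*}$ denote the projections from $\mf d$ onto the three summands.

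\textbf{Forward direction.} Starting from a Manin quadruple, $\dot\rho$ is a well-defined $\g$-action by derivations thanks to $[\g, \h] \subset \h$ (previous proposition) and Jacobi in $\mf d$; $\delta$ lands in $\wedge^2 \h$ by antisymmetry of $[B, A]$; its transpose $\delta^t$ coincides with the Lie bracket on the Lie subalgebra $\h^* \subset \mf d$, giving co-Jacobi; and $\g$-invariance of $\delta$ follows from $\mf d$-invariance of $\la,\ra$ together with $[\g, \h^*] \subset \h^*$. The substantive step is \eqref{mcoc}. I would pair it with $A \otimes B \in \h^* \otimes \h^*$ and use
\[
t^{ij}\la[e_i, X], A\ra\la[e_j, Y], B\ra = \la p_\g[X, A], p_\g[Y, B]\ra
\]
(from $\mf d$-invariance and the Casimir identity $t^{ij}\la e_i, u\ra e_j = u$ on $\g$). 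This turns the relation into an identity in $\mf d$ splitting into a ``main'' part of bracketed expressions and ``cross'' pairings of the projections $p_\h, p_\g, p_{\h^*}$. Jacobi in $\mf d$ applied to $[B, [X, A]]$ and $[B, [Y, A]]$ collapses the main part to $\la [X, Y], [B, A]\ra$, while the identity
\[
\la [X, Y], [B, A]\ra = \la [X, B], [Y, A]\ra - \la [X, A], [Y, B]\ra,
\]
itself from Jacobi and invariance, combined with the $\mf d$-decomposition of $\la [X, A], [Y, B]\ra$, cancels the cross-pairings exactly.

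\textbf{Reverse direction.} Given $(\h, \dot\rho, \delta)$, I would set $\mf d := \h \oplus \g \oplus \h^*$ with the canonical invariant pairing $\la X + v + A, X' + v' + A'\ra := A(X') + A'(X) + \la v, v'\ra_\g$. The bracket combines the Lie structures on $\h, \g, \h^*$ (the last via $\delta^t$) with $[v, X] := \dot\rho(v)X$, $[v, A] := \dot\rho^*(v)A$, and a mixed bracket $[X, A]$ whose $\h$-, $\h^*$-, and $\g$-components are the coadjoint action of $A$ on $X$ (determined by $\delta$), the coadjoint action of $X$ on $A$, and the element $\mu(X, A) \in \g$ characterized by $\la \mu(X, A), v\ra_\g = A(\dot\rho(v) X)$. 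Invariance of the pairing is verified on each pair of summand-types. The various Jacobi identities in $\mf d$, organized by the types of their three arguments, correspond case by case to the Lie $\g$-quasi-bialgebra axioms; the $(\h, \h, \h^*)$-Jacobi recovers \eqref{mcoc} by the same computation as above. The two constructions are then visibly mutual inverses.

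\textbf{Main obstacle.} The computational core is the $(\h,\h,\h^*)$-Jacobi / \eqref{mcoc} translation. The subtlety is that the three families of cross-pairings produced by $p_\h, p_\g, p_{\h^*}$ do not cancel pairwise but only combine via the displayed identity; tracking them carefully through the decomposition $\mf d = \h \oplus \g \oplus \h^*$ is the main technical content of the proof.
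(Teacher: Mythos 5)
Your proposal is correct and follows essentially the same route as the paper: the heart of both arguments is that \eqref{mcoc} contracted with $A\otimes B$ becomes the Jacobi/invariance identity $\la[A,X],[Y,B]\ra-\la[B,X],[Y,A]\ra-\la[X,Y],[B,A]\ra=0$ once the intermediate contraction is split along $\mf d=\h\oplus\g\oplus\h^*$ (your identity $t^{ij}\la[e_i,X],A\ra\la[e_j,Y],B\ra=\la p_\g[X,A],p_\g[Y,B]\ra$ is exactly the paper's ``wiggly edge'' term, written algebraically instead of graphically). Your reverse direction spells out the double construction that the paper dismisses as ``easily seen to be reversible,'' but the content is the same.
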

\begin{proof}
If $(\mf d,\mf h,\mf g,\mf h^*)$ is a Manin quadruple then $\delta$ is clearly a Lie cobracket and the action $\dot\rho$ of $\g$ on $\h$ preserves both the bracket and the cobracket on $\h$. To show that $(\h,\dot\rho,\delta)$ is a Lie $\g$-quasi-bialgebra we thus need to verify the relation \eqref{mcoc}. In fact, this relation is simply the Jacobi identity
\begin{equation}\label{jac-coc}
\la[A,X],[Y,B]\ra-\la[B,X],[Y,A]\ra-\la[X,Y],[B,A]\ra=0
\end{equation}
for $X,Y\in\h$, $A,B\in\h^*$.

To see it, it is convenient to use the graphical notation
$$\la [x,y],z\ra=
\begin{tikzpicture}[baseline=-0.1cm,scale=0.5]
\draw[thick] (-1,0)--(0,0)--(0.5,0.866) (0,0)--(0.5,-0.866);
\fill[white](-1,0)circle[radius=1em] (0.5,0.866)circle[radius=1em] (0.5,-0.866)circle[radius=1em];
\draw(-1,0)node{$x$} (0.5,0.866)node{$z$}
(0.5,-0.866)node{$y$};
\end{tikzpicture}\qquad
\la[x,y],[z,w]\ra=
\begin{tikzpicture}[baseline=-0.1cm,scale=0.5]
\draw[thick] (-1.5,-0.866)--(-1,0) (-1.5,0.866)--(-1,0)--(0,0)--(0.5,0.866) (0,0)--(0.5,-0.866);
\fill[white](-1.5,-0.866)circle[radius=1em] (-1.5,0.866)circle[radius=1em] (0.5,0.866)circle[radius=1em] (0.5,-0.866)circle[radius=1em];
\draw(-1.5,0.866)node{$x$} (-1.5,-0.866)node{$y$}
(0.5,-0.866)node{$z$} (0.5,0.866)node{$w$}; 
\end{tikzpicture}
\quad(x,y,z,w\in\mf d).
$$
In particular, if $X,Y\in\h$ and $A,B\in\h^*$, we get
$$
\la[A,X],[Y,B]\ra=
\begin{tikzpicture}[baseline=0.4cm]
\draw[thick] (0,0)node[below]{$X$}--(0.2,0.5)--(0,1)node[above]{$A$} (1,0)node[below]{$Y$}--(0.8,0.5)--(1,1)node[above]{$B$} (0.2,0.5)--(0.8,0.5);
\draw (1.5,0.5);
\end{tikzpicture}
=
\begin{tikzpicture}[baseline=0.4cm]
\draw[->-=0.6](0,0)node[below]{$X$}--(0.2,0.5);
\draw[->-=0.6] (0.2,0.5)--(0,1)node[above]{$A$};
\draw[->-=0.6](0.2,0.5)--(0.8,0.5);
\draw[->-=0.6](1,0)node[below]{$Y$}--(0.8,0.5);
\draw[->-=0.6](0.8,0.5)--(1,1)node[above]{$B$};
\end{tikzpicture}
+
\begin{tikzpicture}[baseline=0.4cm]
\draw[->-=0.6](0,0)node[below]{$X$}--(0.2,0.5);
\draw[->-=0.6] (0.2,0.5)--(0,1)node[above]{$A$};
\draw[->-=0.6](0.8,0.5)--(0.2,0.5);
\draw[->-=0.6](1,0)node[below]{$Y$}--(0.8,0.5);
\draw[->-=0.6](0.8,0.5)--(1,1)node[above]{$B$};
\end{tikzpicture}
+
\begin{tikzpicture}[baseline=0.4cm]
\draw[->-=0.6](0,0)node[below]{$X$}--(0.2,0.5);
\draw[->-=0.6] (0.2,0.5)--(0,1)node[above]{$A$};
\draw[decorate,decoration={snake,amplitude=.4mm,segment length=1mm}](0.2,0.5)--(0.8,0.5);
\draw[->-=0.6](1,0)node[below]{$Y$}--(0.8,0.5);
\draw[->-=0.6](0.8,0.5)--(1,1)node[above]{$B$};
\end{tikzpicture}
$$
where directed edges go from $\h$ to $\h^*$ and the wiggly edge connects $\g$ with $\g$. The identity \eqref{jac-coc}
thus gives us
\begin{multline*}
\biggl(
\begin{tikzpicture}[baseline=0.4cm]
\draw[->-=0.6](0,0)node[below]{$X$}--(0.2,0.5);
\draw[->-=0.6] (0.2,0.5)--(0,1)node[above]{$A$};
\draw[->-=0.6](0.8,0.5)--(0.2,0.5);
\draw[->-=0.6](1,0)node[below]{$Y$}--(0.8,0.5);
\draw[->-=0.6](0.8,0.5)--(1,1)node[above]{$B$};
\end{tikzpicture}
-
\begin{tikzpicture}[baseline=0.4cm]
\draw[->-=0.6](0,0)node[below]{$X$}--(0.2,0.5);
\draw[->-=0.6] (0.2,0.5)--(0,1)node[above]{$B$};
\draw[->-=0.6](0.8,0.5)--(0.2,0.5);
\draw[->-=0.6](1,0)node[below]{$Y$}--(0.8,0.5);
\draw[->-=0.6](0.8,0.5)--(1,1)node[above]{$A$};
\end{tikzpicture}
\biggr)
+
\biggl(
\begin{tikzpicture}[baseline=0.4cm]
\draw[->-=0.6](0,0)node[below]{$X$}--(0.2,0.5);
\draw[->-=0.6] (0.2,0.5)--(0,1)node[above]{$A$};
\draw[->-=0.6](0.2,0.5)--(0.8,0.5);
\draw[->-=0.6](1,0)node[below]{$Y$}--(0.8,0.5);
\draw[->-=0.6](0.8,0.5)--(1,1)node[above]{$B$};
\end{tikzpicture}
-
\begin{tikzpicture}[baseline=0.4cm]
\draw[->-=0.6](0,0)node[below]{$X$}--(0.2,0.5);
\draw[->-=0.6] (0.2,0.5)--(0,1)node[above]{$B$};
\draw[->-=0.6](0.2,0.5)--(0.8,0.5);
\draw[->-=0.6](1,0)node[below]{$Y$}--(0.8,0.5);
\draw[->-=0.6](0.8,0.5)--(1,1)node[above]{$A$};
\end{tikzpicture}
\biggr)\\
-
\begin{tikzpicture}[baseline=0.4cm]
\draw[->-=0.6](0,0)node[below]{$X$}--(0.5,0.3);
\draw[->-=0.6](1,0)node[below]{$Y$}--(0.5,0.3);
\draw[->-=0.6](0.5,0.3)--(0.5,0.7);
\draw[->-=0.6] (0.5,0.7)--(0,1)node[above]{$A$};
\draw[->-=0.6] (0.5,0.7)--(1,1)node[above]{$B$};
\end{tikzpicture}
+
\biggl(
\begin{tikzpicture}[baseline=0.4cm]
\draw[->-=0.6](0,0)node[below]{$X$}--(0.2,0.5);
\draw[->-=0.6] (0.2,0.5)--(0,1)node[above]{$A$};
\draw[decorate,decoration={snake,amplitude=.4mm,segment length=1mm}](0.2,0.5)--(0.8,0.5);
\draw[->-=0.6](1,0)node[below]{$Y$}--(0.8,0.5);
\draw[->-=0.6](0.8,0.5)--(1,1)node[above]{$B$};
\end{tikzpicture}
-
\begin{tikzpicture}[baseline=0.4cm]
\draw[->-=0.6](0,0)node[below]{$X$}--(0.2,0.5);
\draw[->-=0.6] (0.2,0.5)--(0,1)node[above]{$B$};
\draw[decorate,decoration={snake,amplitude=.4mm,segment length=1mm}](0.2,0.5)--(0.8,0.5);
\draw[->-=0.6](1,0)node[below]{$Y$}--(0.8,0.5);
\draw[->-=0.6](0.8,0.5)--(1,1)node[above]{$A$};
\end{tikzpicture}
\biggr)
=0
\end{multline*}
which is the relation \eqref{mcoc} contracted with $A\otimes B$ (the terms are parenthesized to correspond to the 4 terms of \eqref{mcoc}).

This construction is easily seen to be reversible: if $(\h,\dot\rho,\delta)$ is a Lie $\g$-quasi-bialgebra then we get a Lie bracket on $\mf d$, as \eqref{jac-coc} is the non-trivial part of the Jacobi identity in $\mf d$.
\end{proof}

\section{Construction of $\g$-quasi-Poisson groups}

Let $(\mf d,\h,\g,\h^*)$ be a Manin quadruple and let $H$ be the 1-connected group integrating $\h$ (or at least such that the adjoint action of $H$ on $\mf d$ is well-defined). In this section we shall describe how to make $H$ to a $\g$-quasi-Poisson group.

The adjoint action of $\g$ on $\h$ extends to an action $\rho$ of $\g$ on $H$ by infinitesimal automorphisms. More generally,
for any $v\in\mf d$ let us define a vector field $\hat\rho(v)$ on $H$ via
$$\hat\rho(v)|_h:=p(\on{Ad}_h v)\,h\qquad(h\in H)$$
where $p:\mf d\to\h$ is the projection w.r.t.\ $\g\oplus\h^*$. By construction
$$\hat\rho(X)=X^L\ \ \text{for }X\in\h,\quad\hat\rho(v)=\rho(v)\ \ \text{for }v\in\g.$$

\begin{prop}\label{prop:hat-rho}
$\hat\rho$ is an action of $\mf d$ on $H$.
\end{prop}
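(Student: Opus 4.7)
My plan is to recognize $\hat\rho$, at least morally, as the infinitesimal right action of $\mf d$ on $P\backslash D$, where $D$ is a Lie group integrating $\mf d$ and $P\subset D$ is the subgroup with Lie algebra $\mf p:=\g\oplus\h^*=\mf c^\perp$, identified locally with $H$ via $h\mapsto Ph$. By the preceding proposition, $\mf p$ is a Lie subalgebra of $\mf d$, so $P$ makes sense and this right action yields a Lie algebra homomorphism $\mf d\to\Gamma(TH)$ automatically. Since we do not want to assume that $\mf d$ integrates, I would then replicate this argument by a direct infinitesimal computation on $H$.

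Concretely, for an $\h$-valued function $f$ on $H$ let $f^R$ denote the vector field $h\mapsto f(h)\cdot h$ obtained by right-translating $f(h)\in\h=T_1H$ to $T_hH$. Then $\hat\rho(v)=\tilde p_v^R$ with $\tilde p_v(h):=p(\on{Ad}_h v)$. The bracket of two right-invariant vector fields satisfies $[X^R,Y^R]=-[X,Y]^R$; coupling this with the Leibniz rule gives
$$[f^R,g^R]=\bigl(-[f,g]_\h+\mathcal L_{f^R}g-\mathcal L_{g^R}f\bigr)^R$$
for any $\h$-valued $f,g$. Since $t\mapsto\exp(t\tilde p_u(h))\,h$ is an integral curve of $\hat\rho(u)$ through $h$, a short computation using $\on{Ad}_{\exp(t\xi)}=e^{t\on{ad}_\xi}$ yields
$$\bigl(\mathcal L_{\hat\rho(u)}\tilde p_v\bigr)(h)=p\bigl([p(\on{Ad}_h u),\,\on{Ad}_h v]\bigr).$$

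Combining these two ingredients and writing $\tilde u=\on{Ad}_h u$, $\tilde v=\on{Ad}_h v$, the desired identity $[\hat\rho(u),\hat\rho(v)]=\hat\rho([u,v])$ reduces, after cancelling the common factor $(\cdot)\cdot h$, to the purely algebraic statement in $\mf d$:
$$p([\tilde u,\tilde v])\;=\;-\bigl[p(\tilde u),p(\tilde v)\bigr]\,+\,p\bigl([p(\tilde u),\tilde v]\bigr)\,-\,p\bigl([p(\tilde v),\tilde u]\bigr)\qquad\forall\,\tilde u,\tilde v\in\mf d.$$
Denoting by $q=\on{id}-p$ the complementary projection onto $\mf p=\g\oplus\h^*$ and using that $\h$ is a subalgebra (so $[p(\tilde u),p(\tilde v)]=p([p(\tilde u),p(\tilde v)])$), one checks by a direct expansion that this identity is equivalent to
$$p\bigl([q(\tilde u),q(\tilde v)]\bigr)=0\quad\forall\,\tilde u,\tilde v\in\mf d,$$
i.e.\ to $[\mf p,\mf p]\subset\mf p$, which is precisely the content of the preceding proposition.

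The main (and really the only non-trivial) step is spotting this equivalence to the subalgebra property of $\mf p$; once one has that in mind, the computation is essentially bookkeeping. The minor pitfall I would watch for is keeping the sign conventions for the right-invariant trivialization straight throughout, since $[X^R,Y^R]=-[X,Y]^R$ rather than $+[X,Y]^R$.
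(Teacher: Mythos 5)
Your proof is correct, but it takes a genuinely different route from the paper's. The paper realizes $\hat\rho$ geometrically: it passes to the $1$-connected group $D$ integrating $\mf d$, lets $\mf d$ act on $P_-\backslash D$ (where $\mf p_-=\g\oplus\h^*$) by projecting the left-invariant vector fields, and pulls this action back along the local diffeomorphism $H\to P_-\backslash D$; when $P_-$ is not closed it replaces the global quotient by local leaf spaces of the coset foliation. You instead verify $[\hat\rho(u),\hat\rho(v)]=\hat\rho([u,v])$ by a direct computation in the right trivialization of $TH$, and your reduction is exactly right: with $q=\on{id}-p$ and using that $\h=\on{im}p$ is a subalgebra, the difference between $p([\tilde u,\tilde v])$ and your right-hand side is precisely $p([q(\tilde u),q(\tilde v)])$, so the bracket relation is equivalent to $[\mf p_-,\mf p_-]\subset\mf p_-$, which the preceding proposition supplies. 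Your computation of $\mathcal L_{\hat\rho(u)}\tilde p_v$ is also fine, since the curve $\exp\bigl(t\,p(\on{Ad}_hu)\bigr)h$ agrees with an integral curve of $\hat\rho(u)$ to first order at $t=0$, which is all a Lie derivative needs. What each approach buys: yours is self-contained and sidesteps entirely the question of whether $P_-\subset D$ is closed, which is the only delicate point in the paper's argument; the paper's is more conceptual, exhibiting $H$ locally as the homogeneous space $P_-\backslash D$ with its dressing-type $\mf d$-action, an identification that is reused later (e.g.\ $D/P_-\cong H$ in the moduli-space examples). One small remark: your stated motivation (``we do not want to assume that $\mf d$ integrates'') is moot, since by Lie's third theorem it always does and the paper freely uses $D$; the genuine subtlety is the closedness of $P_-$, which your computation happens to avoid anyway. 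You do still need, as the paper notes in its standing hypotheses, that $\on{Ad}_h$ is defined on all of $\mf d$ for $h\in H$.
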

\begin{proof}
Let $D$ be the 1-connected Lie group integrating $\mf d$ and let $P_-\subset D$ be the connected Lie subgroup integrating the Lie subalgebra $\mf p_-:=\g\oplus\h^*\subset\mf d$. Let us first suppose that $P_-$ is closed, so that the quotient $P_-\backslash D$ is a manifold. The action of $\mf d$ on $D$ by the left-invariant vector fields projects to an action of $\mf d$ on $P_-\backslash D$. Moreover the composition $H\to D\to P_-\backslash D$ is a local diffeomorphism, so the action of $\mf d$ on $P_-\backslash D$ pulls back to an action on $H$. By construction, this action is $\hat\rho$.

This argument is easily refined to the case when $P_-\subset D$ is not closed. The cosets $P_-\,d$, $d\in D$, form a foliation $\mathcal F$ of $D$. For any $h\in H$ let us choose an open subset $h\in U_h\subset D$ with the property that $H\cap U_h$ intersects every leaf of $\mathcal F$ once, so that we have a diffeomorphism $H\cap U_h\cong U_h/\mathcal F$. The Lie algebra $\mf d$ acts on $U_h/\mathcal F$ and under the diffeomorphism $H\cap U_h\cong U_h/\mathcal F$ this action is equal to $\hat\rho$.
\end{proof}

\begin{thm}\label{thm:construction-of-H}
Let $(\mf d,\h,\g,\h^*)$ be a Manin quadruple, $(\h,\dot\rho,\delta)$ the corresponding Lie $\g$-quasi-bialgebra, and $H$ the 1-connected Lie group integrating $\h$. There is a unique $\g$-quasi-Poisson structure $(\rho,\pi)$ on $H$ making $H$ to a $\g$-quasi-Poisson group, such that the corresponding Lie $\g$-quasi-bialgebra is  $(\h,\dot\rho,\delta)$. Explicitly,
\begin{equation}\label{pi-expl}
\pi=\frac{1}{2}\,\hat\rho(E^i)\wedge (E_i)^L,
\end{equation}
where $E_i$ is a basis of $\h$ and $E^i$ the dual basis of $\h^*$. 
\end{thm}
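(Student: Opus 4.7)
The plan is to prove uniqueness and existence separately. For uniqueness, if $(\rho,\pi)$ and $(\rho,\pi')$ both satisfy the axioms and induce the same Lie $\g$-quasi-bialgebra, the fusion correction in \eqref{qmult} depends only on $\rho$, so $\pi-\pi'$ is an ordinary multiplicative bivector on $H$; it vanishes at $1$ with zero linearization (both give the same $\delta$), and a standard argument---iterating the Lie-derivative identity \eqref{Liedpi} along left-invariant vector fields and using connectedness of $H$---forces $\pi-\pi'\equiv 0$.

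For existence I would define $\pi$ by \eqref{pi-expl} and verify the axioms in order of difficulty. The value $\pi|_1=0$ is immediate, since $\hat\rho(E^i)|_1=p(E^i)=0$ because $E^i\in\h^*\subset\ker p$. The $\g$-invariance follows from $[\hat\rho(u),\hat\rho(E^i)]=\hat\rho([u,E^i])$ together with the fact, coming from the Manin quadruple structure, that $[\g,\h]\subset\h$ and $[\g,\h^*]\subset\h^*$: the two terms in $\mathcal L_{\rho(u)}\pi$ cancel via the pairing-invariance identity $\la[u,E^i],E_j\ra+\la E^i,[u,E_j]\ra=0$. The linearization of $\pi$ at $1$ gives the correct $\delta$ because only the $\h$-component $p([X,E^i])$ of $[X,E^i]\in\mf d$ contributes to $(\mathcal L_{X^L}\pi)|_1$, and invariance of the pairing rewrites this as the cobracket defined via the Manin quadruple.

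The main technical step---and the place where I expect the real obstacle---is the multiplicative identity \eqref{qmult}. My plan is to right-translate both sides to the identity, writing $R_{h^{-1}}\pi|_h=\tfrac{1}{2}\,p(\on{Ad}_h E^i)\wedge\on{Ad}_h E_i\in\bigwedge^2\h$. Using $\on{Ad}_{hh'}=\on{Ad}_h\on{Ad}_{h'}$ and decomposing $\on{Ad}_{h'}E^i$ along $\mf d=\h\oplus\g\oplus\h^*$ as $X^i+Y^i_\g+Y^i_{\h^*}$, the piece $\on{Ad}_h X^i$ reproduces (after retranslating) $h\,\pi|_{h'}$; the piece from $Y^i_{\h^*}$ produces (once $\on{Ad}_h$ leaks a bit back into $\h$) part of $\pi|_h\,h'$; and the leftover, combined with the piece from $\on{Ad}_h Y^i_\g$ (using $\on{Ad}_h e-e\in\h$ for $e\in\g$), matches precisely the fusion term $-\tfrac{1}{2}t^{ij}(\rho(e_i)|_h h')\wedge(h\,\rho(e_j)|_{h'})$ via the invariance of the pairing on $\g$. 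The delicate matching of these $\g$-mediated contributions is exactly where the Manin quadruple, as opposed to a Manin triple, plays its role.

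For the quasi-Jacobi identity $[\pi,\pi]/2=\rho(\phi)$, instead of attacking the Schouten bracket head-on I would set $T:=[\pi,\pi]/2-\rho(\phi)$ and exploit the already-verified axioms. Using \eqref{qmult}, one checks that $T$ is a $\g$-invariant trivector satisfying an appropriate multiplicativity identity on $H\times H\to H$, while its linearization at $1$ vanishes by the Lie $\g$-quasi-bialgebra axiom \eqref{mcoc}---essentially the computation in the proof of the previous theorem run backwards. The same uniqueness-type argument then forces $T\equiv 0$.
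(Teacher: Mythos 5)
Your proposal is correct in outline but takes a genuinely different route from the paper. The paper does not verify the axioms for the formula \eqref{pi-expl} directly; it \emph{constructs} the quasi-Poisson group conceptually, as the coisotropic reduction $(\hat H\circledast_{\mf d}\hat H)/H$ of the fusion of two copies of $H$ carrying the $\mf d$-action $\hat\rho$ and zero bivector, identified with $H$ via $(h_1,h_2)\mapsto h_1h_2^{-1}$. With that construction the quasi-Jacobi identity is automatic (fusion and coisotropic reduction preserve the quasi-Poisson property), and multiplicativity follows from a short diagram chase exhibiting the group product as a composition of manifestly quasi-Poisson maps; the explicit formula \eqref{pi-expl} and the linearization $\delta$ are only read off at the end. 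Your route---define $\pi$ by \eqref{pi-expl} and check each axiom by hand---is more elementary and self-contained, but it concentrates all the work in the two steps you yourself flag as delicate. Both can be completed. For \eqref{qmult}, right-translate to $\varpi(h):=\pi|_h\,h^{-1}=\tfrac12\,p(\on{Ad}_hE^i)\wedge\on{Ad}_hE_i$ and apply the three projections $p,q,r$ onto $\h,\g,\h^*$ to the $\on{Ad}_{h'}$-invariance of $t_{\mf d}=E^i\otimes E_i+E_i\otimes E^i+e_\alpha\otimes e^\alpha$: one gets $r(\on{Ad}_{h'}E^i)\otimes\on{Ad}_{h'}E_i=E^j\otimes E_j$ and $q(\on{Ad}_{h'}E^i)\otimes\on{Ad}_{h'}E_i=-e_\alpha\otimes p(\on{Ad}_{h'}e^\alpha)$, so the $\h$-, $\h^*$- and $\g$-components of $\on{Ad}_{h'}E^i$ contribute \emph{exactly} $\on{Ad}_h\varpi(h')$, $\varpi(h)$ and the fusion term, respectively; the matching is cleaner than the ``leakage'' bookkeeping you anticipate, with no leftover cross-terms to chase.

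Two corrections to record. First, in the quasi-Jacobi step your reduction to ``ordinary multiplicative trivector with vanishing linearization'' is sound (the additivity of the anomaly $[\pi,\pi]/2-\rho(\phi)$ under fusion is the relevant input from \cite{AKM}), but the linearization of this anomaly at $1$ is the co-Jacobiator of $\delta$ --- the terms involving $\rho$ drop out because $\rho(e_i)|_1=0$ and $\pi$ is $\g$-invariant --- so it vanishes because $\delta$ is assumed to be a Lie cobracket, \emph{not} because of the mixed cocycle condition \eqref{mcoc}; the latter is the infinitesimal counterpart of \eqref{qmult} and plays no role at this step. Second, be aware of what the paper's approach buys beyond brevity: the presentation of $H$ as $(\hat H\circledast_{\mf d}\hat H)/H$ is reused verbatim for the moduli-space examples and for the quantization theorem, so a purely computational proof here would leave that structure still to be established later.
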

\begin{proof}
Uniqueness follows from $\pi|_1=0$ and from \eqref{Liedpi}.

Let us note that $\hat H:=(H,\hat\rho,0)$ is  a $\mf d$-quasi-Poisson manifold, as the stabilizer of $\hat\rho$ at any point $h\in H$ is coisotropic, namely $\on{Ad}_{h^{-1}}\mf p_-$, where $\mf p_-=\g\oplus\h^*\subset\mf d$.  The diagonal map $\hat H\to\hat H\circledast_{\mf d}\hat H$ is $\mf d$-quasi-Poisson.

Let $\mf p_+:=\h\oplus\g\subset\mf d$; we have $\h=(\mf p_+)^\perp\subset\mf p_+$ and $\mf p_+/\h=\g$. Using the coisotropic reduction by $\h=(\mf p_+)^\perp\subset\mf p_+$ we turn
$$(\hat H \circledast_{\mf d} \hat H )/H$$
to a $\g$-quasi-Poisson manifold. 

Let us identify $(\hat H \circledast_{\mf d} \hat H )/H$ with $H$ via $(h_1,h_2)\mapsto h_1 h_2^{-1}$. We thus constructed a $\g$-quasi-Poisson structure $(\rho,\pi)$ on $H$. Let us check that it makes $H$ to a $\g$-quasi-Poisson group, i.e.\ that the group product $H\times H\to H$ is a $\g$-quasi-Poisson map. To see it, consider the $\g$-quasi-Poisson maps
$$
\begin{tikzcd}
(\hat H\circledast_{\mf d}\hat H\circledast_{\mf d}\hat H)/H\arrow{r}{\Delta}\arrow{d}{\epsilon} & (\hat H\circledast_{\mf d}\hat H\circledast_{\mf d}\hat H\circledast_{\mf d}\hat H)/H\arrow{d}{\kappa} \\
(\hat H\circledast_{\mf d}\hat H)/H & \bigl((\hat H \circledast_{\mf d} \hat H )/H\bigr)\circledast_\g\bigl((\hat H \circledast_{\mf d} \hat H )/H\bigr)\arrow[swap,dashed, near start]{l}{\epsilon \circ (\kappa \circ \Delta)^{-1}\ }
\end{tikzcd}
$$
given by
$$\epsilon([h_1,h_2,h_3])=[h_1,h_3]$$
$$\Delta([h_1,h_2,h_3])=[h_1,h_2,h_2,h_3]$$
$$\kappa([h_1,h_2,h_3,h_4])=([h_1,h_2],[h_3,h_4])$$
  where $h_1,h_2,h_3,h_4\in H$ and the square bracket denotes the respective  cosets. It can be easily seen that $\kappa \circ \Delta$ is in fact a diffeomorphism. Moreover, since all the maps at the diagram are $\mathfrak{g}$-quasi-Poisson maps, we have that $\epsilon \circ (\kappa \circ \Delta)^{-1}$, which coincides with the multiplication $H\times H\to H$ under our identification $H\cong(\hat H \circledast_{\mf d} \hat H )/H$, is also $\mathfrak{g}$-quasi-Poisson. 
  
Let us now prove that the bivector field $\pi$ on $H=(\hat H\circledast_{\mf d}\hat H)/H$ satisfies \eqref{pi-expl}. Let $e_\alpha$ be a basis of $\g$ and $e^\alpha$ the dual basis of $\g$ (w.r.t.\ $\la,\ra$). We have
$$t_{\mf d}=E^i\otimes E_i+E_i\otimes E^i + e_\alpha\otimes e^\alpha$$
and thus
$$\pi=-\frac{1}{2}\tau_*\bigl(\hat\rho_1(E^i)\wedge\hat\rho_2(E_i)+\hat\rho_1(E_i)\wedge\hat\rho_2(E^i)+\hat\rho_1(e_\alpha)\wedge\hat\rho_2(e^\alpha)\bigr)$$
where $\tau:H\times H\to H$, $\tau(h_1,h_2)=h_1h_2^{-1}$ is our identification of $H=(\hat H\circledast_{\mf d}\hat H)/H$ with $H$. Since $\tau(h,1)=h$ and $\hat\rho(\h^*)|_1=\hat\rho(\g)|_1=0$ and $\hat\rho(X)=X^L$ for $X\in\h$, we get
$$\pi|_h=-\frac{1}{2}\tau_*\bigl(\hat\rho_1(E^i)|_h\wedge \hat\rho_2(E_i)|_1\bigr)=\frac{1}{2}\,\hat\rho(E^i)|_h\wedge h\,E_i$$
as we wanted to show.

To finish the proof, we need to show that $[X^L,\pi]|_1=\delta(X)$, i.e.
$$\la[X^L,\pi]|_1,A\otimes B\ra=\la X,[B,A]\ra\qquad\forall X\in\h,\;A,B\in\h^*.$$
Using \eqref{pi-expl} we get
\begin{multline}
\la[X^L,\pi]|_1,A\otimes B\ra=\frac{1}{2}\bigl(\la[X,E^i],A\ra\la E_i,B\ra-\la[X,E^i],B\ra\la E_i,A\ra\bigr)\\
=\frac{1}{2}\bigl(\la[X,B],A\ra-\la[X,A],B\ra\bigl)=\la X,[B,A]\ra
\end{multline}
as we needed.
\end{proof}

\begin{rem}
The proof above is not the shortest possible, but it contains a conceptual  construction of the $\g$-quasi-Poisson group $H$ as $(\hat H\circledast_{\mf d}\hat H)/H$ which will be useful when we consider moment maps and deformation quantization. This construction is of interest also in the special case of $\g=0$, i.e.\ of Poisson-Lie groups.
\end{rem}

\begin{example}
Let us consider the Manin quadruple of Example \ref{ex:nilp} in the case of $\mf d=\mf{sl}(N)$, with the inner product $\la A,B\ra=\on{Tr} AB$. In this case $\g$ consists of the traceless block-diagonal $N\times N$-matrices (for some partition of $(1,2,\dots,N)$ into blocks), $\h$ of matrices with blocks above the diagonal, and $\h^*$ of matrices with blocks below the diagonal. We have $H=1+\h\subset SL(N)$. For an element $X\in H$ let $x^k_l$ denote the matrix elements of $X$; $x^k_l$'s are functions on $H$. Equation \eqref{pi-expl} gives the following expression for the $\g$-quasi-Poisson bracket on $H$:
$$
\{x^k_l,x^m_n\}=\frac{1}{2}\Bigl(x^k_n x^m_l\bigl(\theta^{\dot{l}}_{\dot{n}}-\theta^{\dot{n}}_{\dot{l}}+\theta^{\dot{k}}_{\dot{m}}-\theta^{\dot{m}}_{\dot{k}}\bigr)+\delta^k_n \sum_r \delta^{\dot{r}}_{\dot{n}} x^m_rx^r_l-\delta^m_l \sum_r \delta^{\dot{r}}_{\dot{l}} x^k_rx^r_n\Bigr).
$$
where
$$
\theta^m_n=
\begin{cases}
1&m<n\\
0&m\geq n
\end{cases}
$$
and $\dot n$, for $1\leq n\leq N$, denotes the number of the block to which $n$ belongs.
\end{example}

\section{Moment maps}

In this section we shall see that the theory of moment maps, as known from the case of Lie bialgebras and Poisson-Lie groups, can be developed also for Lie $\g$-quasi-bialgebras and \gqP\ Lie groups.

\begin{defn}
If $(H,\rho_H,\pi_H)$ is a \gqP\ group and $(M,\rho_M,\pi_M)$ a \gqP\ manifold, an action $H\times M\to M$ is \emph{\gqP} if it is a \gqP\ map 
$$H\circledast M\to M.$$

 If $(\h,\dot\rho,\delta)$ is a Lie $\g$-quasi-bialgebra, an action $\psi$ of $\h$ on $M$ is \emph{\gqP} if it is $\g$-equivariant and if
\begin{equation}\label{inf-action}
[\psi(X),\pi_M]=-\psi(\delta(X))+\half t^{ij}\,\psi(\dot\rho(e_i)X)\wedge\rho_M(e_j)
\end{equation} for every $X\in\h$.
\end{defn}

\begin{prop}
If $(H,\rho_H,\pi_H)$ is a \gqP\ group and $(M,\rho_M,\pi_M)$ a \gqP\ manifold, and  $\cdot:H\times M\to M$ is a $\g$-equivariant action of $H$ on $M$, then this action is \gqP\  iff the corresponding action $\psi$ of $\h$, $\psi(X)|_m:=\left.\frac{d}{dt}\right|_0 e^{-tX}\cdot m$ ($m\in M$), is \gqP.
\end{prop}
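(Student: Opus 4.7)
The plan is to write out the global \gqP\ condition explicitly and prove the equivalence by differentiating (respectively integrating) at the unit. Using the notation $L_h:M\to M$, $m'\mapsto h\cdot m'$, and $R_m:H\to M$, $h\mapsto h\cdot m$, the action is \gqP\ if and only if
\begin{equation}\label{plan-star}
\pi_M|_{h\cdot m}=(L_h)_*\pi_M|_m+(R_m)_*\pi_H|_h-\half t^{ij}(R_m)_*\rho_H(e_i)|_h\wedge(L_h)_*\rho_M(e_j)|_m
\end{equation}
holds for all $h,m$. At $h=1$ both sides reduce to $\pi_M|_m$, since $\pi_H|_1=0$ and $\rho_H(u)|_1=0$ (the vector fields $\rho_H(u)$ generate Lie group automorphisms of $H$, hence fix the unit).

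For the forward direction I would pull \eqref{plan-star} back to $T_mM$ via $(L_{h^{-1}})_*$, substitute $h=\exp(\epsilon X)$, and differentiate at $\epsilon=0$. The ingredients are: the identity $\frac{d}{d\epsilon}|_0 V|_{\exp(\epsilon X)}=[X^L,V]|_1$ for any vector field $V$ on $H$ with $V|_1=0$; the pushforward $(R_m)_*Y=-\psi(Y)|_m$ for $Y\in T_1H=\h$ (extended multiplicatively to $\bigwedge\h$); and the Lie $\g$-quasi-bialgebra relations $\delta(X)=[X^L,\pi_H]|_1$ and $[\rho_H(e_i),X^L]|_1=\dot\rho(e_i)X$. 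With these, the LHS of \eqref{plan-star} gives $-[\psi(X),\pi_M]|_m$ (the sign from $\psi(X)|_m=\frac{d}{dt}|_0 e^{-tX}\cdot m$), the $\pi_H$-term on the RHS gives $\psi(\delta(X))|_m$, and the $t^{ij}$-term gives $-\half t^{ij}\psi(\dot\rho(e_i)X)|_m\wedge\rho_M(e_j)|_m$; rearranging reproduces \eqref{inf-action} exactly.

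For the converse I would argue as follows. The set $S\subset H$ on which \eqref{plan-star} holds for all $m$ is closed under multiplication: given $h_1,h_2\in S$, substituting \eqref{plan-star} for $(h_1,h_2\cdot m)$ and $(h_2,m)$ into each other and using \eqref{qmult} for $\pi_H$ together with the $\g$-equivariance of the action produces \eqref{plan-star} at $(h_1h_2,m)$, in accordance with the general principle that a composition of \gqP\ maps is \gqP. Since $H$ is connected it suffices to show $S$ contains a neighborhood of $1$: for fixed $X\in\h$ and $m\in M$ both sides of \eqref{plan-star} along $h(\epsilon)=\exp(\epsilon X)$ are bivector-valued curves that agree at $\epsilon=0$ and, by the same derivative computation as in the forward direction now carried out at general $\epsilon$ (using \eqref{inf-action} at $\exp(\epsilon X)\cdot m$ and \eqref{qmult} at $\exp(\epsilon X)$), satisfy the same first-order linear ODE in $\epsilon$; uniqueness then forces equality for all $\epsilon$.

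The main obstacle is the ODE step: one must check that the $t^{ij}$ and $\rho_H(e_i)$ contributions arising from the multiplicativity of $\pi_H$ on the RHS fit together with the $t^{ij}$ and $\rho_M(e_j)$ terms supplied by \eqref{inf-action} on the LHS, so that the two sides really do satisfy a common ODE. This is a direct, if notationally heavy, generalisation of the standard ``infinitesimal determines global'' argument for Poisson-Lie group actions, the new feature being that the $t^{ij}$-couplings between $H$ and $M$ match in the way prescribed by the quasi-Poisson structure.
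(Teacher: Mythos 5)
Your proposal is correct and rests on the same two pillars as the paper's proof, but the paper packages them more efficiently. You establish (i) that the global condition propagates under multiplication in $H$ (your set $S$ is closed under products, via \eqref{qmult} and $\g$-equivariance) and (ii) that near the identity the global and infinitesimal conditions determine each other, by differentiating, respectively integrating, along $\exp(\epsilon X)$. The paper instead introduces a single device: a twisted action $*$ of $H$ on $\bigwedge^2TM$,
$h*b_m:=h\cdot b_m+\pi_H|_h\cdot m-\half t^{ij}(\rho_H(e_i)|_h\cdot m)\wedge(h\cdot\rho_M(e_j)|_m)$.
That this is an action is exactly your step (i); the global \gqP\ condition becomes literally ``$\pi_M$ is $*$-invariant'', and \eqref{inf-action} becomes invariance of $\pi_M$ under the induced infinitesimal action. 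The equivalence is then the standard fact that a section is invariant under a connected group action iff it is invariant under the corresponding Lie algebra action --- your step (ii) done once and for all. In particular the ``main obstacle'' you flag, namely checking that the $t^{ij}$- and $\rho_H(e_i)$-contributions make the two sides of your ODE agree at general $\epsilon$, evaporates: once $*$ is known to be a group action, the general-$\epsilon$ computation reduces by translation to the $\epsilon=0$ computation you already carried out correctly (your sign bookkeeping $(R_m)_*Y=-\psi(Y)|_m$, $\pi_H|_1=0$, $\rho_H(u)|_1=0$ does reproduce \eqref{inf-action}). So your route is a sound, more hands-on unpacking of the paper's argument; the paper's formulation buys a cleaner converse, while yours makes the differentiation explicit. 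Both implicitly use connectedness of $H$ for the converse.
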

\begin{proof}
Let us define an action $*$ of $H$ on $\bigwedge^2 TM$: for $h\in H$, $m\in M$, $b_m\in\bigwedge^2T_mM$
$$ h*b_m:=h\cdot b_m + \pi_H|_h\cdot m-\half t^{ij} (\rho_H(e_i)|_h\cdot m)\wedge (h\cdot \rho_M(e_j)|_m)\in{\textstyle\bigwedge}^2T_{h\cdot m}M.$$
The fact that $*$ is an action follows easily from the multiplicativity property \eqref{qmult} of $\pi_H$ and from the $\g$-equivariance of the action $\cdot:H\times M\to M$ and of the product $H\times H\to H$.

The action $*$ has the property that the map $\cdot:H\circledast M\to M$ is \gqP\ iff the bivector field $\pi_M$ is $H$-invariant under the $*$-action, i.e.\ iff $\pi_M$ is invariant under the corresponding action of the Lie algebra $\h$. The latter invariance is Equation \eqref{inf-action}.
\end{proof}

As observed in \cite{LBS}, if $(M,\rho,\pi)$ is a $\g$-quasi-Poisson manifold then the operators
$$d_{\pm}:=\pm[\pi,\cdot]+\half t^{ij}\,\rho(e_i)\wedge[\rho(e_j),\cdot]$$
are differentials on the graded algebra $\Gamma(\bigwedge TM)$ of polyvector fields on $M$.
Comparing with \eqref{inf-action} we see that a $\g$-equivariant action $\psi$ of $\h$ on $M$ is \gqP\ iff 
$$d_-\psi(X)=-\psi(\delta(X)).$$

Let us recall that if $A\to M$ is a vector bundle and if $d_A$ is a differential on the graded algebra $\Gamma(\bigwedge A^*)$ then $A$ is a Lie algebroid with the bracket
$$\la[u,v]_A,\omega\ra=[d_A i_u+i_u d_A,i_v]\omega\qquad u,v\in\Gamma(A),\omega\in\Gamma(A^*)$$
and with the anchor $\mathsf{a}:A\to TM$
$$\mathsf{a}(u) f=\la u,d_A f\ra\qquad u\in\Gamma(A), f\in C^\infty (M).$$
In particular, $d_\pm$ give us two Lie algebroid structures $([,]_+,\mathsf a_+)$ and $([,]_-,\mathsf a_-)$ on $T^*M$. The anchors are
$$\mathsf{a}_+(\alpha)=\sigma(\alpha,\cdot),\quad\mathsf{a}_-(\alpha)=\sigma(\cdot,\alpha)$$
where
$$\sigma:=\pi+\half \rho(t)\in\Gamma(T^{\otimes 2}M).$$

\begin{prop}\label{prop:Lie-H-star}
Let $(H,\rho,\pi)$ be a $\g$-quasi-Poisson Lie group. Then for any $\alpha,\beta\in\h^*$
$$[\alpha^L,\beta^L]_+=-[\alpha,\beta]^L.$$
\end{prop}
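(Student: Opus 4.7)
The plan is to pair both sides with an arbitrary left-invariant vector field $X^L$, $X\in\h$, and show that
$$\la [\alpha^L,\beta^L]_+,X^L\ra = -\la [\alpha,\beta],X\ra$$
as a function on $H$. Since left-invariant vector fields span $TH$ pointwise, such pairings determine both 1-forms and establish the proposition.

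Expanding the defining formula $\la [u,v]_+,\omega\ra = [d_+ i_u + i_u d_+, i_v]\omega$ with $u=\alpha^L$, $v=\beta^L$, $\omega=X^L$ produces four terms. Three of them vanish for elementary reasons: the pairings $\la \alpha^L,X^L\ra = \la\alpha,X\ra$ and $\la \beta^L,X^L\ra = \la\beta,X\ra$ are \emph{constant} functions on $H$ by left-invariance; $d_+$ annihilates constants (both $[\pi,f]$ and $\rho(e_j)f$ vanish for constant $f$); and $i_{\alpha^L}$ kills functions. Only one term survives:
$$\la [\alpha^L,\beta^L]_+,X^L\ra = -\,i_{\beta^L}\,i_{\alpha^L}\,(d_+ X^L).$$

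The crux is then to compute $d_+ X^L$. Using the Schouten-bracket sign $[\pi,X^L] = -\mathcal L_{X^L}\pi$ together with \eqref{Liedpi}, the first piece gives
$$[\pi,X^L] = -\delta(X)^L + \tfrac12\, t^{ij}\,\rho(e_i)\wedge [X^L,\rho(e_j)],$$
while the $\g$-correction piece of $d_+$ contributes $\tfrac12\, t^{ij}\,\rho(e_i)\wedge[\rho(e_j),X^L] = -\tfrac12\, t^{ij}\,\rho(e_i)\wedge[X^L,\rho(e_j)]$, which cancels the $\rho$-term above \emph{exactly}. This yields the clean identity
$$d_+ X^L = -\delta(X)^L.$$
This cancellation is really the heart of the matter: the precise form of the $\g$-correction built into $d_+$ is exactly tuned so that left-invariant vector fields become $d_+$-coboundaries, with value controlled by the cobracket.

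Substituting back gives $\la [\alpha^L,\beta^L]_+,X^L\ra = i_{\beta^L}\,i_{\alpha^L}\,\delta(X)^L$, a constant function on $H$ which, under the sign convention of the excerpt ($\la\delta(X),A\otimes B\ra = \la X,[B,A]\ra$), equals $-\la [\alpha,\beta],X\ra$. I do not expect any serious obstacle; the only delicate part is the bookkeeping of signs — Schouten bracket antisymmetry and the dualization between $\delta$ and the bracket on $\h^*$ — while the conceptual content is entirely contained in the cancellation in $d_+ X^L$.
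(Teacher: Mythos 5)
Your proof is correct and follows the same route as the paper's: both reduce the bracket to the single surviving term $-i_{\beta^L}i_{\alpha^L}d_+X^L$ after noting the other contractions are constants killed by $d_+$, and both rest on the identity $d_+X^L=-\delta(X)^L$ derived from \eqref{Liedpi}. Your write-up merely makes explicit the cancellation between the $\rho$-term of \eqref{Liedpi} and the $\g$-correction in $d_+$, which the paper leaves implicit.
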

\begin{proof}
For any $X\in\h$ we have, by \eqref{Liedpi},
$$d_+X^L=-\delta(X)^L$$
and thus
\begin{multline*}
\la[\alpha^L,\beta^L]_+,X^L\ra=[d_+i_{\alpha^L}+i_{\alpha^L} d_+,i_{\beta^L}]X^L\\
=-i_{\beta^L}i_{\alpha^L}d_+X^L=\la\alpha\otimes\beta,\delta(X)\ra=-\la[\alpha,\beta],X\ra.
\end{multline*}
Since this equality holds for every $X\in\h$, we have indeed $[\alpha^L,\beta^L]_+=-[\alpha,\beta]^L$.
\end{proof}

If $(\mf d,\h,\g,\h^*)$ is a Manin quadruple then both $\h$ and $\h^*$ are $\g$-quasi-bialgebras; we shall call $\h^*$ \emph{the dual Lie $\g$-quasi-bialgebra of $\h$}.
We can now formulate and prove the main result of this section.

\begin{thm}[Moment map]
Let $(\h,\dot\rho,\delta)$ be a \Lgqb\ and let $H^*$ be the 1-connected \gqP\ Lie group integrating its dual. If
$$\mu:M\to H^*$$
is a \gqP\ map then the map
$$\psi:\h\to\Gamma(TM),\quad\psi(X)=-\mathsf a_+(\mu^*X^L)$$
is a \gqP\ action of $\h$ on $M$.
\end{thm}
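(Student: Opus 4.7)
The plan is to verify the three conditions defining a $\g$-quasi-Poisson action of $\h$ on $M$: the $\g$-equivariance of $\psi$, the Lie algebra homomorphism property $\psi([X,Y]) = [\psi(X), \psi(Y)]$, and the infinitesimal multiplicativity \eqref{inf-action}.

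The $\g$-equivariance is routine: $\mathsf{a}_+^M$ is $\g$-equivariant since $\sigma_M = \pi_M + \tfrac12\rho_M(t)$ is $\g$-invariant; $\mu^*$ is $\g$-equivariant since $\mu$ is; and the $\g$-action on $H^*$ is by group automorphisms fixing the identity, so it permutes left-invariant 1-forms, acting on $\h = (\h^*)^*$ by the quasi-bialgebra action $\dot\rho$ (agreement follows from $\g$-invariance of the Manin pairing). Combining these gives $[\rho_M(u), \psi(X)] = \psi(\dot\rho(u)X)$.

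For the Lie algebra homomorphism property, I combine three ingredients: (i) Proposition~\ref{prop:Lie-H-star} applied to the $\g$-quasi-Poisson Lie group $H^*$, whose dual Lie algebra is $\h$, gives $[X^L, Y^L]_+^{H^*} = -[X,Y]^L$; (ii) the pullback $\mu^*$ intertwines the $[,]_+$-brackets on 1-forms, which follows from the Cartan-style formula for $[,]_+$ together with the identity $d\mu \circ \mathsf{a}_+^M \circ \mu^* = \mathsf{a}_+^{H^*}$ (a direct consequence of $\mu$ being a $\g$-equivariant quasi-Poisson map, hence $d\mu\circ\sigma_M^\sharp\circ\mu^* = \sigma_{H^*}^\sharp$); (iii) the anchor $\mathsf{a}_+^M$ is a Lie algebra morphism on sections. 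Chaining these:
\[
[\psi(X), \psi(Y)] = \mathsf{a}_+^M\bigl(\mu^*[X^L, Y^L]_+^{H^*}\bigr) = -\mathsf{a}_+^M(\mu^*[X,Y]^L) = \psi([X,Y]).
\]

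The heart of the proof is \eqref{inf-action}, equivalent to $d_-^M\psi(X) = -\psi(\delta(X))$. I plan to establish this by direct computation, expanding $\psi(X) = -\sigma_M^\sharp(\mu^*X^L)$ and using the quasi-Poisson condition $\mu_*\pi_M = \pi_{H^*}$. The key input is the Maurer-Cartan-type identity on $H^*$: for $X \in \h = (\h^*)^*$, the exterior derivative $dX^L$ is the left-invariant 2-form corresponding to the bracket of $\h^*$, which is dual to the cobracket $\delta$ of $\h$ via the Manin pairing. Pulled back along $\mu$, this produces the $-\psi(\delta(X))$ term, while the $t$-dependent corrections in $d_-^M$ and in $\sigma_M = \pi_M + \tfrac12\rho_M(t)$ combine to yield the $\rho_M(e_j)$ terms of \eqref{inf-action}. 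I expect the main obstacle to be bookkeeping these correction terms consistently; a more conceptual route, which I would pursue if the direct calculation becomes unwieldy, realizes everything inside the coisotropic reduction description $H^* = (\hat{H^*} \circledast_\mf d \hat{H^*})/H^*$ of Theorem~\ref{thm:construction-of-H}, where $X \in \h \subset \mf d$ acts via $\hat\rho^{H^*}$ and the desired identity descends from the $\mf d$-quasi-Poisson structure on $\hat{H^*}$.
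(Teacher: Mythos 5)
Your architecture --- $\g$-equivariance, the homomorphism property via Proposition~\ref{prop:Lie-H-star} together with the anchor being a bracket morphism and $\mu^*$ intertwining $[\,,]_+$, and multiplicativity reduced to $d_-\psi(X)=-\psi(\delta(X))$ with the Maurer--Cartan identity $dX^L=(\delta(X))^L$ as the key input --- is exactly the skeleton of the paper's proof, which packages all of this into the single statement that the composite $\Psi=\bigwedge\mathsf a_+\circ\mu^*:(\Omega(H^*),d,[\,,]_+)\to(\Gamma(\bigwedge TM),d_-,[\,,])$ is a morphism of differential Gerstenhaber algebras. So the route is the same; the problem is that the one step you explicitly defer as ``bookkeeping the correction terms'' is the only step with real content, and as written it is not established.

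Concretely, what remains after the (correct) reformulation of \eqref{inf-action} is the identity $d_-\bigl(\mathsf a_+(\alpha)\bigr)=\bigl(\textstyle\bigwedge^2\mathsf a_+\bigr)(d\alpha)$, applied to $\alpha=\mu^*X^L$ together with the trivial $d\mu^*=\mu^*d$. You propose to verify this by expanding $\sigma^\sharp$ and $d_-$ and checking that the $t$-dependent terms conspire; they do, but you have not carried this out and you flag it as the likely failure point. The clean reason --- and the paper's actual argument --- is Lie algebroid duality: $d_-$ is by definition the Lie algebroid differential of $(T^*M,[\,,]_-,\mathsf a_-)$; for any Lie algebroid $A$ the transpose of the anchor induces a morphism of differential graded algebras $(\Omega(M),d)\to(\Gamma(\bigwedge A^*),d_A)$ (the dual of the statement that the anchor is a Lie algebroid morphism to $TM$); and $\mathsf a_-^t=\mathsf a_+$ because $\mathsf a_\pm$ are the two contractions with the same tensor $\sigma=\pi+\tfrac12\rho(t)$. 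Adopting this observation closes the gap without any bookkeeping, and the same duality also yields your item~(iii) (that $\mathsf a_+$ preserves brackets) for free. Your item~(ii), that $\mu^*$ preserves $[\,,]_+$, still deserves a line of justification; the paper, like you, derives it from $\mu$ being $\g$-equivariant and quasi-Poisson, i.e.\ from $\mu_*\sigma_M=\sigma_{H^*}$. Your proposed fallback via the realization $H^*=(\hat{H^*}\circledast_{\mf d}\hat{H^*})/H^*$ is not needed and is not the paper's route for this theorem.
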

\begin{proof}
Let us extend the Lie brackets $[,]_\pm$ to Gerstenhaber brackets $[,]_\pm$ on the graded algebras $\Omega(M)$ and $\Omega(H^*)$. The morphisms of graded algebras
\begin{equation}\label{dgerst}
(\Omega(H^*),d,[,]_+)\xrightarrow{\mu^*}(\Omega(M),d,[,]_+)\xrightarrow{\bigwedge\mathsf a_+}(\Gamma(\textstyle\bigwedge TM),d_-,[,])
\end{equation}
preserve the indicated differentials and Gerstenhaber brackets: for the first map it follows immediately from the fact that $\mu^*$ is \gqP. For the second map it follows from the more general fact that if $(A,[,]_A,\mathsf a)$ is a Lie algebroid over $M$ then $\bigwedge\mathsf a:(\Gamma(\bigwedge A),[,]_A)\to(\Gamma(\bigwedge TM),[,])$ is  a morphism of Gerstenhaber algebras, and $\bigwedge\mathsf a^t:(\Omega(M),d)\to(\Gamma(\bigwedge A^*),d_A)$ is a morphism of differential graded algebras, where $\mathsf a^t:T^*M\to A^*$ is the transpose of $\mathsf a$, together with the fact that $\mathsf a_-^t=\mathsf a_+$.

Let now $\Psi:(\Omega(H^*),d,[,]_+)\to(\Gamma(\textstyle\bigwedge TM),d_-,[,])$ be the composition of \eqref{dgerst}. By definition we have, for $X\in\h$, that $\psi(X)=-\Psi(X^L)$. The fact that $\psi$ is an action of $\h$ now follows from Proposition \ref{prop:Lie-H-star}, and the fact that this action is \gqP, i.e.\ that $d_-\psi(X)=-\psi(\delta(X))$, follows from $d(X^L)=(\delta(X))^L$.
\end{proof}

\section{Examples from moduli spaces of flat connections}

  One  important class of $\mathfrak{g}$-quasi-Poisson manifolds arises from moduli spaces of flat connections on surfaces with marked points on the boundary. It is also this place where one can find many examples of momentum maps, which will be our focus in this section.

  Firstly, let us mention a more general notion of fusion, which we will need in this section.
  \begin{defn}[\cite{AKM}]
  Let $(\g,t)$, $(\g',t')$ be two Lie algebras with invariant inner products. If $(M,\rho,\pi)$ is a $\g\oplus \g\oplus\g'$-quasi-Poisson manifold, then the (internal) fusion of $M$ is the $\g\oplus\g'$-quasi-Poisson manifold $(M,\rho_\circledast,\pi_\circledast)$ given by
  $$\rho_\circledast(X,Y)=\rho(X,X,Y)\quad \forall X\in \g,\, Y\in \g',$$
  $$\pi_\circledast=\pi-\frac{1}{2}(\rho_1\wedge \rho_2)(t),$$
  where $\rho_1$ and $\rho_2$ are the actions of the first and of the second copy of $\g$ in $\g\oplus \g\oplus\g'$, respectively.
  \end{defn}

Let now $\Sigma$ be an compact oriented surface with boundary (not necessarily connected) and let $V\subset\partial\Sigma$ be a finite set meeting every component of $\Sigma$. Let, as above, $\mf d$ be a Lie algebra with invariant inner product, and $D$ a Lie group integrating $\mf d$. The moduli space
$$M_D(\Sigma,V)$$
 of flat principal $D$-bundles over $\Sigma$ trivialized over $V$ is naturally identified with the space of groupoid morphisms
$$\Hom(\Pi_1(\Sigma,V),D).$$
There is a natural action of the group $D^V$ on $M_D(\Sigma,V)$ given by changing the trivialization of a principal $D$-bundle $P\to\Sigma$ over $V$.

\begin{thm}[\cite{LBS2}]\label{thm:moduli}
There is a natural $D^V$-quasi-Poisson structure $\pi_{\Sigma,V}$ on $M_D(\Sigma,V)$ characterized by these properties:
\begin{enumerate}
\item If $\Sigma$ is a disk and $V$ consists of two points then $\pi_{\Sigma,V}=0$.
\item If $\Sigma=\Sigma_1\sqcup\Sigma_2$ (and so $M_D(\Sigma,V)=M_D(\Sigma_1,V_1)\times M_D(\Sigma_1,V_1)$) then 
$\pi_{\Sigma,V}=\pi_{\Sigma_1,V_1}+\pi_{\Sigma_2,V_2}$.
\item Let $x,y\in V$ and let $\Sigma'$ be obtained from $\Sigma$ by a ``corner connected sum'':
$$
\begin{tikzpicture}[scale=0.5]
\fill[color=white!90!black] (0,0.5) -- (2.5,1.5) -- (0,2.5);
\fill[color=white!90!black] (6,0.5) -- (3.5,1.5) -- (6,2.5);
\draw (0,0.5) -- (2.5,1.5) -- (0,2.5);
\draw (6,0.5) -- (3.5,1.5) -- (6,2.5);
\node[below] at (2.5,1.5) {$x$};
\node[below] at (3.5,1.5) {$y$};
\draw[->] (7.5,1.5) -- (9.5,1.5);
\fill[color=white!90!black]  (11,0.5) -- (13,1.5) -- (15,0.5)-- plot[smooth, tension=.7] coordinates {(15,2.5) (13,2) (11,2.5)};
\draw (11,0.5) -- (13,1.5) -- (15,0.5);
\draw  plot[smooth, tension=.7] coordinates {(15,2.5) (13,2) (11,2.5)};
\node[below] at (13,1.5) {$z$};
\node at (-1,1.5) {$\Sigma$};
\node at (16,1.5) {$\Sigma'$};
\end{tikzpicture}
$$
Let $V'$ be the image of $V$ in $\Sigma'$, i.e.\ with $x$ and $y$ identified (denoted $z$ on the picture). In this case $M_D(\Sigma', V')\cong M_D(\Sigma, V)$ (the isomorphism is induced by the gluing map $\Sigma\to\Sigma'$). Then the $D^{V'}$-quasi-Poisson structure on $M_D(\Sigma', V')$ is obtained from  the $D^V$-quasi-Poisson structure on $M_D(\Sigma, V)$ by fusion of the $\mf d$'s acting at $x$ and $y$.
\end{enumerate}
This quasi-Poisson structure is functorial under embeddings of surfaces: if $i:\Sigma'\to\Sigma$ is an embedding then the induced map $i^*:M_D(\Sigma, V)\to M_D(\Sigma', V'=V\cap i(\Sigma'))$ is $D^V$-quasi-Poisson (where $D^{V\setminus V'}$ acts trivially on $M_D(\Sigma', V')$).
\end{thm}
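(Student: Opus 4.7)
The plan is to construct and characterize $\pi_{\Sigma,V}$ via ciliated ribbon graph (``fat graph'') decompositions of $\Sigma$. Any pair $(\Sigma, V)$ can be obtained from a disjoint union of bigons (disks with two marked points) by iterated corner connected sums: concretely, one chooses a ciliated ribbon graph $\Gamma \hookrightarrow \Sigma$ whose univalent vertices are the elements of $V$ and whose regular neighbourhood is $\Sigma$, and regards each edge of $\Gamma$ as a bigon. Properties (1)--(3) then force $\pi_{\Sigma,V}$ to be the result of placing the zero bivector on each bigon, taking the disjoint union, and performing fusion at every internal vertex; this yields \emph{uniqueness}.

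For \emph{existence}, one fixes such a $\Gamma$ and defines $\pi_{\Sigma,V}$ by the above recipe. On the bigon the quasi-Poisson axiom $[\pi,\pi]/2 = \rho(\phi_\g)$ holds with $\pi=0$ because $\phi_{\mf d}$ is $\on{Ad}$-invariant, so the left- and right-invariant extensions $\phi_{\mf d}^L$ and $\phi_{\mf d}^R$ coincide and the two contributions from $\g = \mf d \oplus \mf d$ acting on $D$ cancel. Since fusion and coisotropic reduction both preserve the quasi-Poisson axiom, the resulting $\pi_{\Sigma,V}$ on $M_D(\Sigma,V) \cong D^E / D^{V_{\mathrm{int}}}$ is automatically $D^V$-quasi-Poisson. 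The substantive content of the theorem is then that the outcome is independent of $\Gamma$. Any two ciliated ribbon graph structures on $(\Sigma, V)$ are connected by a finite sequence of local elementary moves---the square (``flip'') move, sliding a univalent vertex along the boundary, and contraction of an internal edge between distinct vertices---and one checks invariance of $\pi_{\Sigma,V}$ separately under each. Each such check is local, involves only a bounded number of copies of $D$, and reduces to an explicit identity between two iterated fusions.

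For \emph{functoriality} under an embedding $i:\Sigma' \hookrightarrow \Sigma$ one chooses a ribbon graph $\Gamma$ on $\Sigma$ whose restriction to $i(\Sigma')$ is a ribbon graph $\Gamma'$ for $(\Sigma', V')$. Under the resulting identifications, $i^*$ becomes a projection realized by a further coisotropic reduction---gauging out the factors of $D^E$ corresponding to the edges of $\Gamma$ outside $\Gamma'$---and is therefore quasi-Poisson by the reduction proposition of Section~2. The main obstacle in the whole argument is invariance of $\pi_{\Sigma,V}$ under the flip move: this is where the specific non-commutative structure of fusion enters in an essential way, and where the combinatorial heart of the theorem lies.
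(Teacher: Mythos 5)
This theorem is stated with the attribution \cite{LBS2} and the paper contains no proof of it, so there is no in-paper argument to compare against; your proposal has to be measured against the strategy of the cited reference, which it does match in outline: decompose $(\Sigma,V)$ into bigons along a ciliated spine, define $\pi_{\Sigma,V}$ by fusion, and prove independence of the spine. Within that outline two points need attention. First, for the uniqueness argument the spine must have vertex set exactly $V$: properties (1)--(3) only let you take disjoint unions of bigons and identify marked points, never delete one, so a graph with additional internal vertices is not reachable by these operations, and your identification $M_D(\Sigma,V)\cong D^E/D^{V_{\mathrm{int}}}$ requires a ``forget a marked point'' step that is neither among (1)--(3) nor an instance of the coisotropic reduction proposition of Section 2 (dividing by the full $\mf d$ acting at a point is not of the form $M/\mf p^\perp$ for a coisotropic $\mf p$). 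A spine with vertices exactly at $V$ always exists because $V$ meets every component; with it the moduli space is $D^E$ on the nose and no reduction is needed for existence either. Your verification that the bigon with $\pi=0$ is $(\mf d\oplus\mf d)$-quasi-Poisson (the two translates of $\phi$ cancel by $\on{Ad}$-invariance) is correct.

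Second, the entire content of the theorem is the independence of the spine, and your proposal only names the elementary moves and asserts invariance without verifying it. Concretely, two spines with vertex set $V$ differ by a sequence of substitutions of generating paths of $\Pi_1(\Sigma,V)$ (e.g.\ $a\mapsto ab$ for composable edges) together with rotations of the cilia, and one must check by direct computation that the sum of fusion terms $-\tfrac{1}{2}\,t^{ij}\rho_x(e_i)\wedge\rho_y(e_j)$, ordered according to the ciliated structure at each marked point, is unchanged under each such move. This finite check, together with associativity of fusion, is where the theorem lives; until it is written out you have a plan rather than a proof. The functoriality argument is sound once well-definedness is in place, though you should record that the cross fusion terms involving a discarded edge push forward to zero under the projection $D^E\to D^{E'}$, and that $\Gamma$ must be chosen so that its ciliated structure restricts to that of $\Gamma'$.
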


Let now $(\mf d,\h,\g,\h^*)$ be a Manin quadruple, $D$ a connected Lie group integrating $\mf d$, and $H,G,H^*\subset D$ the connected subgroups integrating $\h,\g,\h^*$. We shall suppose that the map
$$H\times G\times H^*\to D,\quad (h,g,h^*)\mapsto hgh^*$$
is a diffeomorphism. (If we drop this assumption then we need to replace the moduli spaces considered below with appropriate locally diffeomorphic manifolds.)

\begin{example}[$\g$-quasi-Poisson Lie groups as moduli spaces]
Let us consider the moduli space $M_D(\triangle, V)$ for the triangle (i.e. for the disk with 3 marked points on the boundary). It is $D^3$-quasi-Poisson; after we take its  quotient by 
$$P_-\times P_-\times H\subset D\times D\times D,$$
 we get a $G$-quasi-Poisson manifold. For every marked point we specify on the picture the subgroup of $D$ by which we mod out:
$$
\begin{tikzpicture}[baseline=-1cm]
\coordinate[label=left:{$P_-$}] (A) at (0,0) ;
\coordinate[label=right:{$P_-$}] (B) at (2,0) ;
\coordinate[label=below:{$H$}] (C) at (1,-1.7);
\filldraw[fill=white!90!black] (A)--(B)--(C)--cycle;
\fill (A) circle (0.05) (B) circle (0.05) (C) circle (0.05);
\end{tikzpicture}
$$

By Theorem \ref{thm:moduli}, this $\g$-quasi-Poisson manifold $M_D(\triangle, V)/P_-\times P_-\times H$ can be obtained by a fusion followed by a reduction
$$
\begin{tikzpicture}[baseline=-1cm]
\coordinate[label=left:{$P_-$}] (A0) at (-4,0) ;
\coordinate[label=right:{$P_-$}] (B0) at (-1.5,0) ;
\coordinate (C0) at (-3,-1.7);
\coordinate (C0') at (-2.5,-1.7);
\filldraw[fill=white!90!black] (A0) .. controls (-3,0) and (-3,-.7) .. (C0)--cycle;
\filldraw[fill=white!90!black] (B0) .. controls (-2.5,0) and (-2.5,-.7) ..(C0')--cycle;
\fill (A0) circle (0.05) (B0) circle (0.05) (C0) circle (0.05) (C0') circle (0.05);
\draw[->,decorate,
     decoration={snake,amplitude=.4mm,segment length=2mm,post length=1mm}] (-1.3,-1) -- (-.2,-1);

\coordinate[label=left:{$P_-$}] (A) at (0,0) ;
\coordinate[label=right:{$P_-$}] (B) at (2,0) ;
\coordinate (C) at (1,-1.7);
\filldraw[fill=white!90!black] (A)--(B)--(C)--cycle;
\fill (A) circle (0.05) (B) circle (0.05) (C) circle (0.05);
\path[dotted,-] (C) edge (1,0);
\draw[->,decorate,
     decoration={snake,amplitude=.4mm,segment length=2mm,post length=1mm}] (2.2,-1) -- (3.3,-1);

\coordinate[label=left:{$P_-$}] (A1) at (3.5,0) ;
\coordinate[label=right:{$P_-$}] (B1) at (5.5,0) ;
\coordinate[label=below:{$H$}] (C1) at (4.5,-1.7);
\filldraw[fill=white!90!black] (A1)--(B1)--(C1)--cycle;
\fill (A1) circle (0.05) (B1) circle (0.05) (C1) circle (0.05);
\end{tikzpicture}
$$
$$M_D(\triangle, V)/P_-\times P_-\times H\cong((D/P_-)\circledast_{\mf d}(D/P_-))/H.$$
Here $D$, the moduli space for a bigon (which is $D^2$-quasi-Poisson), has $\pi=0$ (see  
Theorem \ref{thm:moduli}), and thus the $D$-quasi-Poisson space $D/P_-$ has also $\pi=0$. We can identify $D/P_-$ with $H$. We thus have an isomorphism
$$M_D(\triangle, V)/P_-\times P_-\times H\cong(H\circledast_{\mf d}H)/H$$
and thus, as we noticed in the proof of Theorem \ref{thm:construction-of-H} (where we denoted the $\mf d$-quasi-Poisson manifold $H$ with $\pi=0$ by $\hat H$), we have an isomorphism of $\g$-quasi-Poisson manifolds
$$M_D(\triangle, V)/P_-\times P_-\times H\cong H$$
where $H$ on the RHS is the $\g$-quasi-Poisson Lie group given by the Manin quadruple
$(\mf d,\h,\g,\h^*)$. Similarly, the moduli space given by 
$$
\begin{tikzpicture}[baseline=-1cm]
\coordinate[label=left:{$P_+$}] (A) at (0,0) ;
\coordinate[label=right:{$P_+$}] (B) at (2,0) ;
\coordinate[label=below:{$H^*$}] (C) at (1,-1.7);
\filldraw[fill=white!90!black] (A)--(B)--(C)--cycle;
\fill (A) circle (0.05) (B) circle (0.05) (C) circle (0.05);
\end{tikzpicture}
$$
is isomorphic, as a $\g$-quasi-Poisson manifold, to the dual $\g$-quasi-Poisson Lie group 
\begin{equation}\label{moduli-Hstar}
M_D(\triangle, V)/P_+\times P_+\times H^*\cong H^*.
\end{equation}
\end{example}

\begin{example}[Moment map from 3 marked points]
Let $\Sigma$ be a connected compact oriented surface and $V_\Sigma\subset\partial\Sigma$ a set of 3 marked points. Let $i:\triangle\to\Sigma$ be an embedding of a triangle sending the vertices $V_\triangle$ of $\triangle$ to $V_\Sigma$. Then, by Theorem \ref{thm:moduli}, the induced map
$$i^*:M_D(\Sigma,V_\Sigma)\to M_D(\triangle,V_\triangle)$$
is $D^3$-quasi-Poisson, and thus, in view of \eqref{moduli-Hstar}, it induces a $\g$-quasi-Poisson map
$$M_D(\Sigma,V_\Sigma)/P_+\times P_+\times H^*\to H^*,$$
i.e.\ a moment map.
$$
\begin{tikzpicture}
\fill[fill=white!90!black](0,1).. controls (1,0.3) and (1,-0.3)..  (0,-1)--
      (1.5,-2).. controls (1.5,-1.3) and (2.5,-0.7)..  (3,-0.7)--
      (3,0.7).. controls (2.5,0.7) and (1.5,1.3)..  (1.5,2) -- cycle;
\draw (0,1).. controls (1,0.3) and (1,-0.3).. coordinate(A) (0,-1)
      (1.5,-2).. controls (1.5,-1.3) and (2.5,-0.7).. coordinate(B) (3,-0.7)
      (1.5,2).. controls (1.5,1.3) and (2.5,0.7).. coordinate(C) (3,0.7);
\node at (A) [left] {$P_+$};
\node at (B) [below right] {$P_+$};
\node at (C) [above right] {$H^*$};
\filldraw[dashed, fill=white!70!black] (A)--(B)--(C)--cycle;
\fill (A) circle (0.05) (B) circle (0.05) (C) circle (0.05);
\end{tikzpicture}
$$
\end{example}

\begin{example}[Moment map from 2 marked points]\label{ex:2marked}

Let now $\Sigma$ be a connected compact oriented surface and $V_\Sigma=\{A,B\}\subset\partial\Sigma$ a set of 2 marked points. Let $i:\triangle\to\Sigma$ be a map which sends one vertex of $\triangle$ to $A$, the two remaining vertices to $B$, and is an embedding of $\triangle$ with two vertices identified to $\Sigma$. By Theorem \ref{thm:moduli} the map
$$i^*:M_D(\Sigma,V_\Sigma)\to M_D(\triangle, V_\triangle)^\circledast$$
is $D^2$-quasi-Poisson, where $M_D(\triangle, V_\triangle)^\circledast$ is $M_D(\triangle, V_\triangle)$ with fusion applied at the two identified vertices. (Geometrically, $M_D(\triangle, V_\triangle)^\circledast$ is the moduli space for an annulus with two marked points; the two points can be either on the same boundary circle, or on both of them, depending on the order of the fusion, giving two different quasi-Poisson structures.)

The projection
$$p:M_D(\triangle, V_\triangle)^\circledast/P_+\times H^*\to M_D(\triangle, V_\triangle)/P_+\times P_+\times H^*$$
is $\g$-quasi-Poisson. As a result,
$$p\circ i^*:M_D(\Sigma,V_\Sigma)/P_+\times H^*\to M_D(\triangle, V_\triangle)/P_+\times P_+\times H^*\cong H^*$$
is a $\g$-quasi-Poisson map to the dual $\g$-quasi-Poisson group $H^*$, i.e.\ it is a moment map.

As a simple example, let $\Sigma$ be an annulus and let $V_\Sigma$ consist of two points, one on each boundary circle of $\Sigma$.
$$
\begin{tikzpicture}
\filldraw[fill=white!90!black] (0,0)circle [radius=1.5cm];
\coordinate (A) at (0,0.5) ;
\coordinate[label=below:{$H^*$}] (B) at (0,-1.5) ;
\filldraw[fill=white!70!black,dashed] (B)..controls (1.5,-1) and (1,1.5) ..(A)..controls (-1,1.5) and (-1.5,-1) ..(B);
\filldraw[fill=white] (0,0)circle [radius=0.5cm];
\node at (A) [below] {$P_+$};
\fill (A) circle (0.05) (B) circle (0.05);
\end{tikzpicture}
$$
In this case $M_D(\Sigma,V_\Sigma)\cong D\times D$, with the identification given by the holonomies along the dashed curves. Moreover $M_D(\Sigma,V_\Sigma)/P_+\times H^*\cong D$, as the projection
$$D\times D\cong M_D(\Sigma,V_\Sigma) \to M_D(\Sigma,V_\Sigma)/P_+\times H^*$$
restricts to a diffeomophism on $\{1\}\times D\subset D\times D$.

The $\g$-quasi-Poisson structure on $D\cong M_D(\Sigma,V_\Sigma)/P_+\times H^*$ is
$$
  \pi=\frac{1}{2}\left[(E^i\wedge E_i)^L-(E^i \wedge E_i)^R-(e^\alpha)^L\wedge (e_\alpha)^R\right]
$$
with $\g$ acting on $D$ by conjugation.
One can check that it makes $D$ to a $\g$-quasi-Poisson group and that $H\subset D$ (though not $H^*\subset D$) is a $\g$-quasi-Poisson inclusion. The moment map $D\to H^*$
is the projection $D\to D/P_+\cong H^*$. 
\end{example}

\section{Quantization to braided Hopf algebras}

One of the main reason for introducing $\g$-quasi-Poisson Lie groups and Lie $\g$-quasi-bialgebras is that they can be quantized to braided Hopf algebras. We shall construct this quantization procedure in this section.

Let us briefly describe Drinfeld's construction of braided monoidal categories (BMCs) using a Drinfeld associator \cite{D}.
If $U$ and $V$ are two $U\g$-modules, let $t^{U,V}:U\otimes V\to U\otimes V$ be given by $t^{U,V}:=(\rho_U\otimes\rho_V)(t)$ (where, as above, $t\in(S^2\g)^\g\subset\g\otimes\g$), and let $\sigma^{U,V}:U\otimes V\to V\otimes U$ be the flip. Let $U\g\text{-mod}_\hbar$ be the category of $U\g$-modules, where the morphisms are allowed to be formal power series in $\hbar$. If $\Phi\in K\la\la x,y\ra\ra$ is a Drinfeld associator, where $K$ is our base field of characteristic 0, then the braiding
$$\beta^{U,V}=\sigma^{U,V}\circ\exp\Bigl(\frac{\hbar}{2}\,t^{U,V}\Bigr):U\otimes V\to V\otimes U$$
and the associativity constraint
$$\gamma^{U,V,W}=\gamma_0^{U,V,W}\circ\Phi(\hbar\,t^{U,V},\hbar\,t^{V,W}):(U\otimes V)\otimes W\to U\otimes(V\otimes W),$$
where 
$$\gamma_0^{U,V,W}:(U\otimes V)\otimes W\to U\otimes(V\otimes W)$$
is the standard associativity constraint, make $U\g\text{-mod}_\hbar$ to a BMC. We shall denote this braided monoidal category by $U\g\text{-mod}_\hbar^\Phi$.

If $M$ is a $\g$-quasi-Poisson manifold then its \emph{deformation quantization} is a 
 $\g$-equivariant bilinear product $*$ on $C^\infty(M)$
$$f_1*f_2=f_1f_2+\hbar\,B_1(f_1,f_2)+\hbar^2 B_2(f_1,f_2)+\dots$$
 where $B_k$ are bidifferential operators, such that $(C^\infty(M),*,1)$ is an associative algebra in the BMC $U\g\text{-mod}_\hbar^\Phi$, and such that
$$\{f_1,f_2\}=B_1(f_1,f_2)-B_1(f_2,f_1).$$
It is not expected to exist in general. However, if $H$ is a $\g$-quasi-Poisson Lie group, such a $*$ does exist, and, moreover, is a part of a Hopf algebra structure. In order to describe this quantization it is convenient to introduce the following category (a version of $U\g\on{-mod}_\hbar$).

\begin{defn}
The category \emph{$\g\text{-man}_\hbar$} has $\g$-manifolds as objects, and a morphism $F:M\to N$ is a $\g$-equivariant linear map
$$F^*:C^\infty(N)\to C^\infty(M)[[\hbar]],\quad F^*=F_0^*+\hbar\,F_1^*+\hbar^2F_2^*+\dots$$
such that $F_0^*=f^*$ for some smooth map $f:M\to N$ and such that 
 $F_k^*$, $k\geq 1$, are differential operators w.r.t.\ the algebra map $F_0$. The composition of morphisms is given by $(F\circ G)^*:=G^*\circ F^*$.
\end{defn}

The category $\g\text{-man}_\hbar$ is symmetric monoidal, with the product $M_1\otimes M_2:=M_1\times M_2$. Its deformation to a BMC via a Drinfeld associator $\Phi\in\R\la\la x,y\ra\ra$ will be denoted by $\g\text{-man}_\hbar^\Phi$.

Let $\g\on{-man}$ denote the symmetric monoidal category of $\g$-manifolds.
Whenever $M$ is a $\g$-manifold then $(M,\Delta,\epsilon)$ is a comonoid in $\g\text{-man}$, where $\Delta:M\to M\times M$ is the diagonal map, and  $\epsilon:M\to\text{point}$ is the unique map.
A comonoid structure $(M,\Delta_\hbar,\epsilon)$  in $\g\text{-man}_\hbar^\Phi$, such that $(\Delta_\hbar^*)_0=\Delta^*$, is equivalent to a $*$-product on $M$ via
$$f*g=\Delta_\hbar^*(f\otimes g)$$
making $C^\infty(M)$ to an associative algebra in $U\g\on{-mod}_\hbar^\Phi$.

We can now formulate the main result of this section.
\begin{thm}\label{thm:quant}
If $H$ is a $\g$-quasi-Poisson Lie group then there is a Hopf algebra structure on $H$ in the BMC {$\g\on{-man}_\hbar^\Phi$} deforming the Hopf algebra (i.e.\ Lie group) structure of $H$ in {$\g\on{-man}$}, such that the deformed coproduct is a deformation quantization of the $\g$-quasi-Poisson manifold $H$.

Similarly, if $(\h,\dot\rho,\delta)$ is a Lie $\g$-quasi-bialgebra, there is a deformation of the Hopf algebra structure on $U\h$   to a Hopf algebra structure in the BMC {$U\g\on{-mod}_\hbar^\Phi$}, such that, for $v\in\g\subset U\g$, $\Delta(v)-\Delta^{op}(v)=\hbar\,\delta(v)+O(\hbar^2)$.
\end{thm}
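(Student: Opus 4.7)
The plan is to leverage the conceptual construction of $H$ as $(\hat H\circledast_{\mf d}\hat H)/H$ given in the proof of Theorem \ref{thm:construction-of-H}, carrying it through to the quantum level. The virtue of this construction is that $\hat H=(H,\hat\rho,0)$ is an extremely simple quasi-Poisson object: its bivector vanishes and the stabilizers of $\hat\rho$ are coisotropic in $\mf d$, so $\hat\rho(t)=0\in\Gamma(T^{\otimes 2}\hat H)$. I would first quantize $\hat H$ trivially inside the larger BMC $\mf d\text{-man}_\hbar^\Phi$, and then push this structure down to $\g\text{-man}_\hbar^\Phi$ via the coisotropic reduction by $\h\subset\mf p_+$.

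Concretely, from $\hat\rho(t)=0$ it follows that the symmetric bidifferential operator $t^{ij}\,\hat\rho(e_i)f\cdot\hat\rho(e_j)g$ vanishes identically for all $f,g\in C^\infty(\hat H)$, and hence every word in the operators $t^{12}$ and $t^{23}$ acts as zero on iterated pointwise products. Consequently the deformed associator $\Phi(\hbar t^{12},\hbar t^{23})$ acts as the identity on such products, so the ordinary commutative pointwise product on $C^\infty(\hat H)$ is already associative in $\mf d\text{-man}_\hbar^\Phi$; the same argument shows that the undeformed diagonal $\hat H\to\hat H\otimes\hat H$ remains a morphism in that BMC. Thus $\hat H$ acquires a canonical comonoid (and, up to the usual subtlety with $\hat\rho$, Hopf-like) structure in $\mf d\text{-man}_\hbar^\Phi$ directly from its classical structure.

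Next I would promote the classical coisotropic reduction to a colax monoidal functor $\mf d\text{-man}_\hbar^\Phi\to\g\text{-man}_\hbar^\Phi$; on functions this means passing to $\mf p^\perp$-invariants, and the $\mf d$-invariance of $t$ and of $\Phi$ should ensure both that $\mf p^\perp$ acts by derivations of the deformed products and that the associator descends to the invariants. Applying this functor to the comonoid $\hat H$ inside the fusion $\hat H\circledast_{\mf d}\hat H$ and using the identification $(\hat H\circledast_{\mf d}\hat H)/H\cong H$ from Theorem \ref{thm:construction-of-H} then yields a comultiplication on $C^\infty(H)$ in $\g\text{-man}_\hbar^\Phi$ whose leading antisymmetric part is precisely the bivector $\pi$ of \eqref{pi-expl}, which is the required deformation quantization. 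The group multiplication, unit, and inverse of $H$ are classical morphisms that descend through the construction and remain morphisms after quantization, supplying the remaining Hopf algebra data; the Hopf axioms follow from the classical ones on $H$ by naturality. For the companion statement on $U\h$, I would run exactly the same strategy with $C^\infty(\hat H)$ replaced by the $U\mf d$-module $U\mf d\otimes_{U\mf p_-}\mathbf 1$, which by PBW is isomorphic to $U\h$ as a $\g$-module, performing all constructions inside $U\g\text{-mod}_\hbar^\Phi$; the asymptotic formula $\Delta-\Delta^{op}=\hbar\,\delta+O(\hbar^2)$ on $\h$ is then forced by the computation of $\delta$ from $\pi$ in Theorem \ref{thm:construction-of-H}.

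The main obstacle I expect is the key claim of the previous paragraph, namely the rigorous verification that coisotropic reduction refines to a colax monoidal functor between the deformed BMCs. The subtle point is that $\mf p^\perp=\h$ is only an ideal of $\mf p_+$, not of $\mf d$, so one must check by hand that $\Phi(\hbar t^{12},\hbar t^{23})$ and the Drinfeld braiding interact correctly with $\mf p^\perp$-invariants; this will rely on the $\mf d$-invariance of $t$ and $\Phi$ together with the characteristic Manin quadruple relation $[\mf g,\mf h]\subset\mf h$. Once this monoidal reduction is in place, the rest of the proof (producing the coproduct, checking that it is a deformation quantization, and assembling the Hopf algebra axioms) is a formal consequence of the construction.
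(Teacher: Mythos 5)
Your geometric skeleton is the same as the paper's: quantize $\hat H=(H,\hat\rho,0)$ with its undeformed (co)structure inside $\mf d\on{-man}_\hbar^\Phi$, form $\hat H\circledast_{\mf d}\hat H$, and reduce by $H$ to land on $H\cong(\hat H\circledast_{\mf d}\hat H)/H$ in $\g\on{-man}_\hbar^\Phi$. The paper implements exactly this with the colax monoidal functor $F(M)=M/H$ and the comonoid $Q=\hat H$, citing \cite{LBS3} for the facts you are trying to argue by hand (that $Q$ is a cocommutative comonoid in the deformed category and that $F$ is braided colax monoidal). Two of your supporting claims are shakier than you suggest: (i) ``every word in $t^{12}$ and $t^{23}$ acts as zero on iterated pointwise products'' does not follow directly from $\hat\rho(t)=0$ for words of length $\geq 2$, since e.g.\ $\Delta_3^*\bigl(t^{12}t^{23}\bigr)$ involves terms $t^{kl}t^{ij}(\rho(e_k)f)(\rho(e_l)\rho(e_i)g)(\rho(e_j)h)$ in which the second-slot vector fields are composed rather than contracted against a fixed pair of differentials; and (ii) the functor you need is not reduction by the coisotropic $\mf p_+$ but the quotient by the (isotropic) group $H$ acting freely and properly, with colax structure maps $(M\times N)/H\to M/H\times N/H$ — the compatibility with $\Phi$ and the braiding is the content of the cited propositions, not something that follows from invariance of $t$ alone.

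The genuine gap, however, is at the final step. You assert that the group multiplication, unit and inverse ``descend through the construction and remain morphisms after quantization'' and that ``the Hopf axioms follow from the classical ones on $H$ by naturality.'' This is precisely what fails: a Hopf algebra in a braided category must satisfy the compatibility $\Delta\circ m=(m\otimes m)\circ(\on{id}\otimes\beta\otimes\on{id})\circ(\Delta\otimes\Delta)$ involving the \emph{braiding}, and the undeformed multiplication of $H$ does not satisfy this against the deformed coproduct — if it did, no deformation of the product would be needed at all. In the paper the product on $F(Q\otimes Q)$ is \emph{constructed}, not inherited: it is $F(\on{id}_Q\otimes\epsilon_Q\otimes\on{id}_Q)\circ{\tau^{(Q)}_{Q,Q}}^{-1}$, where the invertibility of the colax structure map $\tau^{(Q)}_{Q,Q}$ must be verified; the antipode is $F(\beta^{Q,Q})$, i.e.\ the image of the braiding rather than the classical inverse; and the verification of the braided bialgebra and antipode axioms is the content of the structural Theorem \ref{thm:Hopf} imported from \cite{S}. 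Without this theorem (or an equivalent diagrammatic argument), your construction produces at best a comonoid $F(Q\otimes Q)$ together with some candidate maps, but does not establish the Hopf algebra structure claimed in the statement. The same remark applies to your $U\h$ variant, where additionally the identification $F(U\h\otimes U\h)\cong U\h$ must use the classical antipode $x\otimes y\mapsto x\,S_0(y)$ to match the computation $\Delta-\Delta^{op}=\hbar\,\delta+O(\hbar^2)$.
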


The proof of the theorem is an application of the method of quantization of Lie bialgebras presented in \cite{S} (one can probably also use the original approach of  Etingof and Kazhdan \cite{EK} in combination with the $\g$-quasi-Poisson group $D$ of Example \ref{ex:2marked}, but it seems more complicated). Let us first observe that if $(Q,\Delta_Q,\epsilon_Q)$ and $(Q',\Delta_{Q'},\epsilon_{Q'})$ are  comonoids in a BMC $\mathcal D$ then $Q\otimes Q'$ is a comonoid as well, with the counit $\epsilon_Q\otimes\epsilon_{Q'}$ and the coproduct
$$
\begin{tikzpicture}[baseline=-1cm]
\coordinate (diff) at (0.7,0);
\coordinate (dy) at (0,-0.8);
\node(A1) at (0,0) {$(Q$};
\node(B1) at ($(A1)+(diff)$) {$Q')$};
\node(A2) at (2,0) {$(Q$};
\node(B2) at ($(A2)+(diff)$) {$Q')$};
\node(A3) at ($(A1)+(0,-2.5)+0.5*(diff)$) {$Q$};
\node(B3) at ($(A2)+(0,-2.5)+0.5*(diff)$) {$Q'$};
\coordinate(A) at ($(A3)-(dy)$);
\coordinate(B) at ($(B3)-(dy)$);
\draw[line width=1ex,white] (B1)..controls +(0,-1) and +(0,1)..(B);
\draw (B1)..controls +(0,-1) and +(0,1)..(B);
\draw[line width=1ex,white]  (A2)..controls +(0,-1) and +(0,1)..(A);
\draw (A2)..controls +(0,-1) and +(0,1)..(A);
\draw (A1)..controls +(0,-1) and +(0,1)..(A);
\draw (B2)..controls +(0,-1) and +(0,1)..(B);
\draw (B)--(B3);
\draw(A)--(A3);
\end{tikzpicture}
$$
\begin{thm}[\cite{S}]\label{thm:Hopf}
Let $\mathcal D$ and $\mathcal C$ be BMCs, $(Q,\Delta_Q,\epsilon_Q)$ a cocommutative comonoid in $\mathcal D$, and $F:\mathcal D\to\mathcal C$ a braided colax monoidal functor with these invertibility properties:
the composition
$$F(Q)\xrightarrow{F(\epsilon_Q)}F(1_\mathcal D)\to1_\mathcal C$$
is an isomorphism, and  for every objects $X,Y\in\mathcal D$ the morphism
$$\tau^{(Q)}_{X,Y}:F((X\otimes Q)\otimes Y)\to F(X\otimes Q)\otimes F(Q\otimes Y),$$
defined as the composition
\begin{multline*}
F((X\otimes Q)\otimes Y)\xrightarrow{F((\on{id}_X\otimes\Delta_Q)\otimes\on{id}_Y)}
F\bigl((X\otimes (Q\otimes Q))\otimes Y\bigr)\cong \\
\cong F\bigl((X\otimes Q)\otimes(Q\otimes Y)\bigr)\to F(X\otimes Q)\otimes F(Q\otimes Y),
\end{multline*}
is an isomorphism.

Then $F(Q\otimes Q)$ is a Hopf algebra in $\mathcal C$, with the structure defined as follows:
\begin{itemize}
\item The coalgebra structure on $F(Q\otimes Q)$ is inherited from the coalgebra structure  on $Q\otimes Q$.
\item The product on $F(Q\otimes Q)$ is the composition
\begin{multline*}
F(Q\otimes Q)\otimes F(Q\otimes Q)\xrightarrow{{\tau^{(Q)}_{Q,Q}}^{-1}}F((Q\otimes Q)\otimes Q)\\ \xrightarrow{F(\on{id}_Q\otimes\epsilon_Q\otimes\on{id}_Q)}F(Q\otimes Q).
\end{multline*}
\item The unit is
$$
1_{\mathcal C}\cong F(Q)\xrightarrow{F(\Delta_Q)}F(Q\otimes Q).
$$
\item The antipode is 
$$F(Q\otimes Q)\xrightarrow{F(\beta^{Q,Q})}F(Q\otimes Q)$$
where $\beta^{Q,Q}:Q\otimes Q\to Q\otimes Q$ is the braiding in $\mathcal D$.
\end{itemize}
\end{thm}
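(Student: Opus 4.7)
The plan is to verify the Hopf algebra axioms on $F(Q\otimes Q)$ one at a time, using string diagrams in $\mathcal{C}$ together with the colax monoidal and braided structure of $F$ and the cocommutativity of $(Q,\Delta_Q,\epsilon_Q)$ in $\mathcal{D}$. The coalgebra part is essentially free: $Q\otimes Q$ is a comonoid in $\mathcal{D}$ via the coproduct $(\on{id}_Q\otimes\beta^{Q,Q}\otimes\on{id}_Q)\circ(\Delta_Q\otimes\Delta_Q)$ and counit $\epsilon_Q\otimes\epsilon_Q$, and since $F$ is colax monoidal it sends comonoids to comonoids, producing the claimed coalgebra structure on $F(Q\otimes Q)$ in $\mathcal{C}$.

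For the algebra structure, the key technical input is that the invertibility hypothesis on $\tau^{(Q)}_{X,Y}$ can be iterated: by assembling the $\tau$'s with further coproducts $\Delta_Q$ one constructs analogous isomorphisms $F(Q^{\otimes n})\to F(Q)^{\otimes n}$, as well as their relative versions with additional slots $X,Y$. Using the three-fold version, the associativity of the product $\mu$ reduces, after inverting the appropriate $\tau$, to the coassociativity of $\Delta_Q$ together with the counit identity $(\on{id}\otimes\epsilon_Q)\circ\Delta_Q=\on{id}$; the unit axiom for $\eta=F(\Delta_Q)\circ F(\epsilon_Q)^{-1}$ is a direct consequence of the counit property combined with the first invertibility hypothesis.

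The bialgebra compatibility is the heart of the proof, and it is where the cocommutativity of $Q$ enters essentially. When one writes out the string diagrams for $\Delta\circ\mu$ and for $(\mu\otimes\mu)\circ(\on{id}\otimes\beta\otimes\on{id})\circ(\Delta\otimes\Delta)$ in $\mathcal{C}$, the two sides differ by a swap of two copies of $Q$ past each other via the braiding $\beta^{Q,Q}$ in $\mathcal{D}$; the identity $\beta^{Q,Q}\circ\Delta_Q=\Delta_Q$ is precisely what closes the diagram. The antipode axiom for $S=F(\beta^{Q,Q})$ is then a short further check, again using cocommutativity and the counit axiom (so that the left and right antipode identities both reduce to $\eta\circ\epsilon$).

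The hard part, as is typical for constructions of Hopf algebras in BMCs, is organising the resulting three-dimensional string diagrams so that the role of each hypothesis --- cocommutativity of $Q$, colaxness and braidedness of $F$, invertibility of $\tau^{(Q)}_{X,Y}$ and of $F(\epsilon_Q)$ --- is visible and can be applied in the right place; once the right diagrammatic setup is in place the computations become essentially formal and can be carried out in the graphical calculus for braided monoidal categories.
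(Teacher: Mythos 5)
First, a point of reference: the paper does not prove this theorem at all --- it is quoted verbatim from \cite{S} (``Quantization of Lie bialgebras revisited''), so there is no in-paper argument to compare your proposal against; the honest benchmark is the diagrammatic proof in that reference, and your outline does follow its general strategy (treat $F(Q\otimes Q)$ as the image of a comonoid under a braided colax functor, use the inverses of the $\tau^{(Q)}$ maps to build the product, and verify the axioms in the graphical calculus, with cocommutativity of $Q$ entering in the compatibility of product and coproduct and in the antipode axiom).

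That said, as a proof your text has a genuine gap: every substantive verification is asserted rather than carried out, and the assertions hide exactly the content of the theorem. Concretely: (i) to state associativity you need iterated decompositions $F(Q^{\otimes n})\cong F(Q\otimes Q)^{\otimes(n-1)}$ and, more importantly, compatibilities between $\tau^{(Q)}_{X,Y}$ for varying $X,Y$ (naturality in $X$ and $Y$, and a pentagon-type identity relating $\tau^{(Q)}_{Q\otimes Q,Q}$, $\tau^{(Q)}_{Q,Q\otimes Q}$ and $\tau^{(Q)}_{Q,Q}$ through the colax coherence of $F$ and the associators of $\mathcal D$ and $\mathcal C$); you say these ``can be constructed by assembling the $\tau$'s with further coproducts'' but do not construct them or prove the compatibilities, and without them the reduction of associativity to coassociativity of $\Delta_Q$ is not a proof. (ii) For the bialgebra axiom, the claim that the two sides ``differ by a swap of two copies of $Q$ past each other via $\beta^{Q,Q}$'' is precisely the nontrivial diagram chase: one must push the colax structure maps of $F$ past the braidings of $\mathcal C$ using the \emph{braided} colax property, conjugate everything back inside $F$ via $\tau^{-1}$, and only then does $\beta^{Q,Q}\circ\Delta_Q=\Delta_Q$ apply; none of this is exhibited. (iii) The antipode axiom is dismissed as ``a short further check,'' but it requires expressing $\mu\circ(S\otimes\on{id})\circ\Delta$ as $F$ of a single morphism of $\mathcal D$ built from $\Delta_Q$, $\epsilon_Q$ and $\beta^{Q,Q}$, and then verifying a braid-plus-comonoid identity in $\mathcal D$; this is where the specific choice $S=F(\beta^{Q,Q})$ is justified, and it is not a formal consequence of what precedes it. In short, you have correctly identified the scaffolding and where each hypothesis should be used, but the proof itself --- the coherence lemmas for $\tau^{(Q)}$ and the three or four explicit diagram computations --- is deferred wholesale to ``the computations become essentially formal,'' which is not something a reader can check.
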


\begin{proof}[Proof of Theorem \ref{thm:quant}]
Let $(\mf d,\h,\g,\h^*)$ be the Manin quadruple corresponding to the Lie $\g$-quasi-bialgebra $\h$. Let $\mathcal C=\g\on{-man}_\hbar^\Phi$ and let $\mathcal D$ be the full subcategory of $\mf d\on{-man}_\hbar^\Phi$ consisting of those $\mf d$-manifolds for which the action of $\h\subset\mf d$ integrates to a free and proper action of the group $H$. Let $F:\mathcal D\to\mathcal C$ be given by
$$F(M)=M/H$$
and let $Q=H$ with the action $\hat\rho$ of $\mf d$ (see Proposition \ref{prop:hat-rho}) and with the undeformed coproduct $\Delta_Q$ and undeformed counit $\epsilon_Q$.

By \cite[Propositions 1 and 2]{LBS3} the functor $F$, with the coherence maps $(M\times N)/H\to M/H\times N/H$ given by the natural projections, is a braided colax monoidal functor and $(Q,\Delta_Q,\epsilon_Q)$ is a cocommutative comonoid in $\mathcal D$. The hypotheses of Theorem \ref{thm:Hopf} are satisfied, hence 
$$F(Q\otimes Q)=(H\times H)/H\cong H$$
is a Hopf algebra in $\g\on{-man}_\hbar^\Phi$. The identification $(H\times H)/H\cong H$ we use is $(h_1,h_2)\mapsto h_1 h_2^{-1}$, as in the proof of Theorem \ref{thm:construction-of-H}.

This Hopf algebra structure (in $\g\on{-man}_\hbar^\Phi$) on $H$ is clearly a deformation of the Lie group structure on $H$. To finish the proof we need to check that the $*$-product on $H$ we obtained is indeed a deformation quantization of the $\g$-quasi-Poisson structure on $H$. It follows from the fact \cite{LBS3} that if $(M^{(1)},\Delta^{(1)}_\hbar)$ and $(M^{(2)},\Delta^{(2)}_\hbar)$ are deformation quantizations of two $\mf d$-quasi-Poisson structures on manifolds $M^{(1)}$ and $M^{(2)}$ then their tensor product in $\mf d\on{-man}_\hbar^\Phi$ is a deformation quantization of the fusion $M^{(1)}\circledast M^{(2)}$. As a result the comonoid $Q\otimes Q$ is a deformation quantization of $\hat H\circledast_{\mf d}\hat H$, where $\hat H$ denotes, as in the proof of Theorem \ref{thm:construction-of-H}, $H$ with the action $\hat\rho$ and with $\pi=0$, and thus $F(Q\otimes Q)$ is a quantization of the $\g$-quasi-Poisson manifold $(\hat H\circledast_{\mf d}\hat H)/H$, which is $H$ with its $\g$-quasi-Poisson structure.

The deformation of the Hopf algebra structure on $U\h$ is constructed similarly: we set $\mathcal D=U\mf d\on{-mod}_\hbar^\Phi$, $\mathcal C=U\mf g\on{-mod}_\hbar^\Phi$, $F(V)=V/(\h\cdot V)$, and $Q=U\h=U\mf d/(\mf p_- U\mf d)$ with its original cocommutative coalgebra structure. Finally we identify $F(U\h\otimes U\h)$ with $U\h$ via $x\otimes y\mapsto x\,S_0(y)$, where $S_0:U\h\to U\h$ is the original (non-deformed) antipode. The coproduct in the coalgebra $U\h\otimes U\h$ (in the category $\mathcal D$) applied to $x\otimes 1$, $x\in\h$, is
$$\Delta(x\otimes1)=x\otimes1\otimes1\otimes1+1\otimes 1\otimes x\otimes1-\frac{\hbar}{2}\,1\otimes\delta(x)\otimes1+O(\hbar^2)\in U\h^{\otimes 4}$$
and thus the coproduct in $U\h\cong F(U\h\otimes U\h)$ applied to $x\in\h$ is
$$\Delta(x) =x\otimes 1+1\otimes x+\frac{\hbar}{2}\,\delta(x)+O(\hbar^2),$$
so indeed
$$(\Delta-\Delta^{op})(x)=\hbar\,\delta(x)+O(\hbar^2)$$
as we wanted to show.
\end{proof}

\end{document}